\documentclass[letterpaper,11pt,twoside,keywordsasfootnote,addressatend,noinfoline]{article}
\usepackage{fullpage}
\usepackage[english]{babel}
\usepackage{amssymb}
\usepackage{amsmath}
\usepackage{theorem}
\usepackage{epsfig}
\usepackage{mathrsfs}
\usepackage{imsart}
\usepackage{color}
\usepackage{hyperref}

\theorembodyfont{\slshape}
\newtheorem{theorem}{Theorem}

\newtheorem{lemma}[theorem]{Lemma}

\newenvironment{proof}{\noindent{\scshape Proof.}}{\hspace*{2mm} $\square$}

\newcommand{\C}{\mathscr{C}}

\newcommand{\T}{\mathbb{T}}
\newcommand{\n}{\hspace*{-6pt}}

\DeclareMathOperator{\card}{card \,}
\DeclareMathOperator{\bernoulli}{Bernoulli \,}
\DeclareMathOperator{\var}{Var}
\DeclareMathOperator{\diam}{diam}

%%%%%%%%%%%%%%%%%%%%%%%%%%%%%%%%%%%%%%%%%%%%%%%%%%%%%%%%%%%%%%%%%%%%%%%%%%%%%%%%%%%%%%%%%%%%%%%%%%%%%%%%%%%%%%%%%%%%%%%%%%%%%%%%%%%%%%%%%%%%%%%%%%%%%%%%%%%%%%%%%%%%%%%%%%%%

\begin{document}

\begin{frontmatter}
\title     {Exact insurance premiums for cyber risk \\ of small and medium-sized enterprises}
\runtitle  {Exact insurance premiums for cyber risk of small and medium-sized enterprises}
\author    {Stefano Chiaradonna and Nicolas Lanchier\thanks{This work is partially supported by NSF grant CNS-2000792.}}
\runauthor {Stefano Chiaradonna and Nicolas Lanchier}
\address   {School of Mathematical and Statistical Sciences \\ Arizona State University \\ Tempe, AZ 85287, USA. \\ schiarad@asu.edu \\ nicolas.lanchier@asu.edu}

\begin{abstract} \
 As cyber attacks have become more frequent, cyber insurance premiums have increased, resulting in the need for better modeling of cyber risk.
 Toward this direction, Jevti\'{c} and Lanchier (2020) proposed a dynamic structural model of aggregate loss distribution for cyber risk of small and medium-sized enterprises under the assumption of a tree-based local-area-network topology that consists of the combination of a Poisson process, homogeneous random trees, bond percolation processes, and cost topology.
 Their model assumes that the contagion spreads through the edges of the network with the same fixed probability in both directions, thus overlooking a dynamic cyber security environment implemented in most networks, and their results give an exact expression for the mean of the aggregate loss but only a rough upper bound for the variance.
 In this paper, we consider a bidirectional version of their percolation model in which the contagion spreads through the edges of the network with a certain probability moving toward the lower level assets of the network but with another probability moving toward the higher level assets of the network, which results in a more realistic cyber security environment.
 In addition, our mathematical approach is quite different and leads to exact expressions for both the mean and the variance of the agregate loss, and therefore an exact expression for the insurance premiums.
%  Information is a key component in determining the price of an asset in financial markets, and the main objective of this paper is to study the spread of infection in this context.
%  The network of interactions in financial markets is modeled using a Galton-Watson tree where vertices represent the traders and where two traders are connected by an edge if one of the two traders sells the asset to the other trader.
%  The infection starts from a given vertex and spreads through the edges of the graph going independently from seller to buyer with probability~$p$ and from buyer to seller with probability~$q$.
%  In particular, the set of traders who are aware of the infection is a (bidirectional) bond percolation cluster on the Galton-Watson tree.
%  Using some conditioning techniques and a partition of the cluster of open edges into subtrees, we compute explicitly the first and second moments of the cluster size, i.e., the random number of traders who learn about the infection.
%  We also prove exponential decay of the diameter of the cluster in the subcritical phase.
\end{abstract}

\begin{keyword}[class=AMS]
\kwd[Primary ]{60K35, 91G05}
\end{keyword}

\begin{keyword}
\kwd{Cyber risk, loss modeling, insurance premium, bond percolation, Galton-Watson tree.}
\end{keyword}

\end{frontmatter}

\maketitle

%%%%%%%%%%%%%%%%%%%%%%%%%%%%%%%%%%%%%%%%%%%%%%%%%%%%%%%%%%%%%%%%%%%%%%%%%%%%%%%%%%%%%%%%%%%%%%%%%%%%%%%%%%%%%%%%%%%%%%%%%%%%%%%%%%%%%%%%%%%%%%%%%%%%%%%%%%%%%%%%%%%%%%%%%%%%

\section{Introduction}
 According to The Institute of Risk Management, cyber risk is ``any risk of financial loss, disruption or damage  to the reputation of an organization from some sort of failure of its information technology systems''~\cite{The_Institute_of_Risk_Management_2018}.
 Following~\cite{jevtic_lanchier_2020}, we define cyber risk due to a data breach as ``the risk of a financial loss caused by a breach of an institution's IT infrastructure by unauthorized parties, resulting in exploitation, taking possession of, or disclosure of data assets''.
 Cyber risk has attracted considerable attention within the past years due to the rapidly growing number of cyber attacks~\cite{1257}.
 In the first six months of 2021, there were 2.5 billion malware attacks and 2.5 trillion intrusion attempts in which an intruder gains or attempts to gain unauthorized access to a system or its network~\cite{1252, intrusion_nist}.
 Malicious actors could steal information by exploiting vulnerable privileges, which are actions a user is permitted to perform on an asset, and accounts, which are a set of privileges given to a user~\cite{1246}.
 Some users, especially domain administrators, have network accounts with privileges that give them greater access to information technology~(IT) resources, such as other systems in the network or the entire network itself because they can bypass critical security settings~\cite{1247, 1248}.
 Once malicious actors have obtained the credentials of an authorized user, they could use the stolen account credentials and privileges to try to elevate their own privileges to a higher administrator-level access and travel from system to system in the network~\cite{1247}.
 Because privileged accounts have greater access to the network, credentials remain one of the most sought-after data assets by attackers~\cite{1247, 1252, 1246}.
 According to the Verizon Data Breach Investigations (2018) report, credentials, phishing, and privileges abuse are three of the top five malicious actions in breaches in~2018~\cite{1260}.
 Out of the~3,841 incidents analyzed in the Verizon Data Breach Investigations (2021) report, of which~1,767 had confirmed data disclosure, the data compromised was credentials at~85\%~\cite{1253}. \\
\indent Due to the increase in cyber risks, there has been more demand for cyber insurance~\cite{1258} as companies have been underestimating the financial impact of cyber risk, so they purchase more coverage or higher limits~\cite{1254}.
 According to~\cite{1258}, analysis of data from S\&P Market Intelligence and National Association of Insurance Commissioners, there was a~60\% increase of cyber insurance policies from~2016 to~2019 and approximately a~50\% increase in the amount of total direct written premiums from \$2.1 billion to \$3.1 billion.
 In the~UK, the number of cyber insurance claims doubled between 2019 and 2020~\cite{1254}.
 For all companies with more than~\$500 million in annual revenue in 2020, the average cyber insurance limits rose by~2\%~\cite{1254}.
 In 2020, there was an increase of about~15\% on average of cyber insurance pricing~\cite{1254}.
 More than half of the brokers surveyed by~\cite{1258} reported that their clients saw a~10-30\% price increase in their cyber insurance premiums.
 More companies are purchasing cyber insurance largely due to the growing number of cyber attacks~\cite{1254} despite the seeing higher insurance prices due to increased severity and frequency of the attacks~\cite{1258}.
 Many small to midsize companies are seeking coverage against data loss, revenue loss due to data breach, legal expenses, and other costs~\cite{herath_herath, betterley, 1257}.
 For small and midsize enterprises, the average cost of a data breach was~\$178 thousand~\cite{1255}, so cyber insurance provides a supposed second line of defense to control and mitigate cyber attacks~\cite{1256}.
 However, pricing cyber risk insurance products is still very new due to its unique characteristics, such as the lack of standard scoring system or actuarial tables for rate making~\cite{1257}.
 Cyber risk, like many other operational risks, does not have readily available experience data because it relies on the organization's network~\cite{segal}.
 In~2015,~\cite{eling_wirfs} used a traditional actuarial approach for calculating the frequency and severity of loss due to a cyber attack;
 however they assumed that the ``causal structure of risks remains relatively stable over time''~\cite{1256, jorion}.
 Due to the inherent nature of technological advancements, cyber risk is dynamic, so reliance on historical cyber risk data could be misleading~\cite{eling_wirfs,1256}.
 Therefore, traditional actuarial modelling is insufficient for cyber insurance.
 Instead, cyber risk should be modelled based on a network structure due to the organization's cyber security environment, which is the resilience of the organization's network~\cite{1259, 1256}.
 Network resilience describes the network's ability to function in the presence of adverse conditions~\cite{1250}.
 Considering the organization's network resilience, one can have a better understanding of an optimal risk management strategy~\cite{1256}. \\
\indent Other papers have taken the approach of calculating the financial loss in cyber risk. Amin (2019) developed a structure for a Bayesian network to model the financial loss as function of the key drivers of risk and resilience ~\cite{1256}. Xu and Hua (2019) studied the cybersecurity risks via epidemic models involving loss functions and pricing strategies as well as providing a review of other mathematical models that calculate cyber risk ~\cite{1257}. Antonio and Indratno (2021) also used an epidemic model on regular networks to simulate the process of a virus spreading and calculating the total loss ~\cite{1259}.  However, none of these models fully utilize the aspect of cyber resilience, which is very much connected with bond percolation ~\cite{jevtic_lanchier_2020, 1250}. \\
% \indent Other papers have taken the approach of calculating the financial loss in cyber risk.
%  \cite{1256} developed a structure for a Bayesian network to model the financial loss as a function of the key drivers of risk and resilience.
%  \cite{1257} studied the cybersecurity risks via epidemic models involving loss functions and pricing strategies.
%  \cite{1259} also used an epidemic model on regular networks to simulate the process of a virus spreading and calculating the total loss.
%  \cite{1257} provides a review of other mathematical models that calculate cyber risk.
%  However, none of these models fully utilize the aspect of cyber resilience, which is very much connected to bond percolation~\cite{jevtic_lanchier_2020, 1250}. \\
\indent The dynamical model designed in~\cite{jevtic_lanchier_2020} consists of several components, including a bond percolation process, to calculate the aggregate loss resulting from consecutive cyber attacks on a tree based local area network of small and midsize enterprises.
 The percolation process assumes that the contagion spreads through the edges of the network with a fixed probability, thus modeling a static cyber security environment.
 In this paper, we extend their model by assuming that the contagion spreads through the edges with a certain probability moving toward the lower levels of the network but with another~(typically smaller) probability moving toward the upper levels of the network, thus resulting in a bidirectional version of the percolation component in~\cite{jevtic_lanchier_2020} that accounts for administrative privileges.
 Administrative privileges make it more difficult for a malicious actor to spread malware throughout the network and to elevate their own privileges~\cite{1248, 1247}.
 By controlling the accounts that allow access to certain accounts, an enterprise improves its ability to protect its more valuable assets~\cite{1246}.
%  One could argue that having multiple layers of internal network firewalls would be sufficient protection, but if a malicious actor gains administrator-level privileges, the malicious actor can disable or circumvent the firewalls~\cite{1249}. Note: Intersting but this does not support our model, on the contrary.
 Therefore, our bidirectional version leads to a more realistic cyber security environment by incorporating administrative privileges, but one could also consider other forms of internal network cyber security such as firewalls \cite{1249}.
 In addition, while~\cite{jevtic_lanchier_2020} only obtained a rough upper bound for the variance of the aggregate loss, our analysis relies on other techniques that lead to an exact expression of the variance, and therefore an exact expression for the insurance premiums.

\section{Model description}
 As previously explained, the model we consider in this paper is a bidirectional version of the model introduced in~\cite{jevtic_lanchier_2020}.
 In this section, we give a rigorous description including five components.
 To begin with, we assume that the cyber attacks occur in continuous time at a constant rate:
\begin{itemize}
 \item[1.] we let~$(N_t)$ be a Poisson process with intensity~$\lambda$,
\end{itemize}
 and assume that the~$i$th attack occurs at time
 $$ T_i = \inf \{t : N_t = i \} \quad \hbox{for} \quad i = 1, 2, \ldots $$
 At the times of the attacks, the local area network consists of a random tree.
 Depending on whether the network is static or dynamic, the network can be fixed once and for all or can consist of a sequence of independent realizations of the random tree, but we point out that our results are not sensitive to this distinction.
 More precisely, in the dynamical context,
\begin{itemize}
 \item[2.] we let~$\T_i = (V_i, E_i)$ be independent realizations of the Galton-Watson tree with radius~$R$, i.e., there are~$k$ vertices connected to the root with probability~$p_k$, then~$k$ additional vertices connected to each of those vertices with probability~$p_k$, and so on, up to generation~$R$.
 To ensure that the tree has radius~$R$ and avoid trivialities, we assume~$p_0 = 0$.
\end{itemize}
 In the terminology of branching processes, the probabilities~$p_k$ are referred to as the offspring distribution, and two vertices connected by an edge are called the parent and the offspring, with the parent being the vertex closer to the root.
 Next, to fix the source of the attack,
\begin{itemize}
 \item[3.] we let~$X_i \in V_i$ be a vertex chosen at random.
\end{itemize}
 The actual distribution of~$X_i$ is unimportant for this work since our objective is to compute the insurance premium as a function of~$X_i$. 
 Now, to model the contagion itself~(how the infection spreads through the network from the source), we use bidirectional bond percolation:
\begin{itemize}
 \item[4.] we let~$p, q \in (0, 1)$, and assume that each edge of the tree is independently open
 $$ \begin{array}{rcl}
    \hbox{with probability~$p$} & \hbox{in the direction parent $\to$ offspring} \vspace*{4pt} \\
    \hbox{with probability~$q$} & \hbox{in the direction offspring $\to$ parent}. \end{array} $$
\end{itemize}
 In other words, each edge is identified to two arrows.
 The arrow going away from the root is open with probability~$p$ whereas the arrow going toward the root is open with probability~$q$.
 As previously mentioned, the distinction between~$p$ and~$q$ is motivated by the presence of administrative privileges suggesting that the infection spreads more easily moving away from the root than toward the root, meaning that~$p > q$.
 The model in~\cite{jevtic_lanchier_2020} corresponds to the particular case~$p = q$.
 Then, the set of infected vertices is the open percolation cluster starting from the source:
 $$ \C_i = \{y \in V_i : \hbox{there is a directed open path~$X_i \to y$} \}. $$
 To define the aggregate loss, the last step is to assign a cost to the percolation cluster:
\begin{itemize}
 \item[5.] we let~$c_{i, y}$ for all~$i > 0$ and~$y \in V_i$ be independent and identically distributed
\end{itemize}
 and think of this random variable as the cost of vertex~$y$.
 Then, the total loss resulting from the~$i$th cyber attack and the aggregate loss up to time~$t$ are given respectively by
 $$ C_i = \sum_{y \in \C_i}  c_{i, y} \quad \hbox{and} \quad L_t = \sum_{i = 1}^{N_t} \,C_i = \sum_{i = 1}^{N_t} \ \sum_{y \in \C_i}  c_{i, y}. $$
 In other words, the loss resulting from the~$i$th cyber attack is the total cost of all the vertices that have been infected during the attack, and the aggregate loss is the cumulative loss resulting from all the cyber attacks that occurred by time~$t$.

%%%%%%%%%%%%%%%%%%%%%%%%%%%%%%%%%%%%%%%%%%%%%%%%%%%%%%%%%%%%%%%%%%%%%%%%%%%%%%%%%%%%%%%%%%%%%%%%%%%%%%%%%%%%%%%%%%%%%%%%%%%%%%%%%%%%%%%%%%%%%%%%%%%%%%%%%%%%%%%%%%%%%%%%%%%%

\section{Main results}
 Our main objective is to compute the mean and variance of~$L_t$ as insurance premiums are calculated from these two quantities.
 To state our results and express the mean and variance of the aggregate loss, we need some key quantities.
 First, we let~$\mu$ and~$\sigma^2$ be respectively the mean and variance of the offspring distribution~(the random number of edges starting from each vertex):
 $$ \mu = \sum_{k = 1}^{\infty} \,k p_k \quad \hbox{and} \quad \sigma^2 = \sum_{k = 1}^{\infty} \,(k - \mu)^2 p_k. $$
 Recalling that the local costs~$c_{i, y}$ are identically distributed, to simplify the notation, we let~$c$ denote their common distribution.
 Similarly, because the consecutive Galton-Watson trees, percolation processes, sources of infection, and local costs are identically distributed, the numbers of infected vertices~$S_i = \card (\C_i)$ are also identically distributed, and we let~$S$ denote the common distribution of the size of the consecutive percolation clusters.
 The model parameters (offspring distribution, distribution of the source, percolation parameters~$p$ and~$q$, and distribution of the local costs) vary from one company to another, and the goal of this paper is not to estimate these parameters.
 Instead, our main objective is to compute explicitly the mean and variance of the aggregate loss, and therefore the insurance premiums, as a function of these parameters. \vspace*{5pt} \\
\noindent {\bf Aggregate loss.}
 The aggregate loss can be expressed using the loss resulting from a single attack by conditioning on the number of attacks.
 Similarly, the loss resulting from a single attack can be expressed using the local cost~$c$ by conditioning on the cluster size~$S$.
 Using also that the number of attacks is Poisson distributed, we obtain the following result.
\begin{theorem}-- \
\label{th:loss}
 The mean and variance of the aggregate loss are given by
 $$ E (L_t) = \lambda t E (S) E (c) \quad \hbox{and} \quad \var (L_t) = \lambda t E (S) \var (c) + \lambda t E (S^2) (E (c))^2. $$
\end{theorem}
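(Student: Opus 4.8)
The plan is to view $L_t$ as a doubly nested random sum and to apply the standard conditional-expectation and conditional-variance (Wald-type) identities twice. At the outer level, $L_t = \sum_{i=1}^{N_t} C_i$, and I would first record the structural facts that make this a compound Poisson sum: the total losses $C_i$ are independent and identically distributed, since the Galton-Watson trees, percolation configurations, sources, and local costs attached to distinct attacks are independent, and the family $(C_i)$ is independent of the Poisson process $(N_t)$. Conditioning on $N_t$, the law of total expectation gives $E(L_t) = E(N_t) \, E(C)$, while the law of total variance gives $\var(L_t) = E(N_t) \var(C) + \var(N_t)(E(C))^2$, where $C$ denotes the common distribution of the $C_i$. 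Using $E(N_t) = \var(N_t) = \lambda t$ for the Poisson process, these collapse to
\begin{equation*}
E(L_t) = \lambda t \, E(C) \qquad \text{and} \qquad \var(L_t) = \lambda t \, E(C^2).
\end{equation*}

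At the inner level I would repeat the same move for $C = \sum_{y \in \C} c_{\cdot, y}$, which is a sum of $S = \card (\C)$ independent copies of the local cost $c$. The point to check is that the cluster $\C$, hence its cardinality $S$, is a deterministic function of the tree, the percolation arrows, and the source, so $S$ is independent of the family of local costs. Conditioning on $S$ then yields $E(C) = E(S) \, E(c)$ and $\var(C) = E(S) \var(c) + \var(S)(E(c))^2$, whence
\begin{equation*}
E(C^2) = \var(C) + (E(C))^2 = E(S) \var(c) + \var(S)(E(c))^2 + (E(S))^2 (E(c))^2 = E(S) \var(c) + E(S^2)(E(c))^2.
\end{equation*}
Substituting the expressions for $E(C)$ and $E(C^2)$ into the outer-level formulas produces exactly $E(L_t) = \lambda t \, E(S) \, E(c)$ and $\var(L_t) = \lambda t \, E(S) \var(c) + \lambda t \, E(S^2)(E(c))^2$, which is the claim.

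There is no genuine obstacle in this argument — it is two rounds of conditioning — so the only thing demanding care is the independence bookkeeping: one must verify that the $C_i$ are i.i.d. and independent of $(N_t)$, and that the cluster size $S$ is independent of the costs $c_{\cdot, y}$; both are immediate from the way the model is built up in the previous section. One also tacitly needs $c$ to have a finite second moment for every quantity above to be finite, which I would record as a standing assumption.
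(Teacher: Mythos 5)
Your proposal is correct and follows essentially the same route as the paper: two rounds of conditioning (first on the number of attacks $N_t$, then on the cluster size $S$) combined with the law of total variance and the Poisson identities $E(N_t) = \var(N_t) = \lambda t$. The only cosmetic difference is that you package the final combination through $E(C^2) = \var(C) + (E(C))^2$, whereas the paper expands $\var(C)$ and $(E(C))^2$ separately and then uses $\var(S) + (E(S))^2 = E(S^2)$; the computations are identical in substance.
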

 In particular, the theorem shows that the mean and variance of the aggregate loss depends on the first and second moments of the size of the infected cluster therefore, in order to compute the insurance premiums, the next step is to compute those two quantities. \vspace*{5pt} \\
\noindent {\bf First moment.}
 To begin with, we look at the first moment of the cluster size.
 This quantity has been computed explicitly in~\cite{jevtic_lanchier_2020} in the symmetric case~$p = q$ using combinatorial techniques.
 To extend their result to the more general asymmetric case, we partition the set of infected vertices into subtrees and use linearity of the expectation, which gives the following theorem.
\begin{theorem}-- \
\label{th:first}
 The conditional first moment on the tree with radius~$R$ given that the infection starts at distance~$r$ from the root is equal to
 $$ E_r (S) = \frac{1}{1 - \mu p} \bigg(1 + q \bigg(\frac{1 - q^r}{1 - q} \bigg) (1 - p) - (\mu p)^{R - r + 1} \bigg(\frac{1 - pq (1 + (\mu - 1)(\mu pq)^r)}{1 - \mu pq} \bigg) \bigg). $$
\end{theorem}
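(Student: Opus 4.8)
\begin{proofof}{Theorem~\ref{th:first}}
The plan is to partition the infected cluster~$\C$ according to where the open path from the source ``turns around,'' and then apply linearity of the expectation. Fix the source~$X$ at distance~$r$ from the root, and let~$a_0 = X, a_1, \dots, a_r$ denote its ancestors, so that~$a_j$ lies at distance~$r - j$ from the root and~$a_r$ is the root. Because the underlying graph is a tree, for every vertex~$y$ there is a unique self-avoiding path from~$X$ to~$y$; this path first climbs from~$X$ to the nearest common ancestor~$a_j$ of~$X$ and~$y$ (traversing~$j$ edges in the offspring~$\to$ parent direction) and then descends from~$a_j$ to~$y$ (traversing, say,~$d = d(y)$ edges in the parent~$\to$ offspring direction). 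Hence~$y \in \C$ if and only if those~$j$ upward arrows and~$d$ downward arrows are all open, an event of probability~$q^j p^{d}$ by independence of the arrows. Writing~$\C^{(j)}$ for the set of infected vertices whose nearest common ancestor with~$X$ is exactly~$a_j$, so that~$\C^{(0)}$ is the set of infected descendants of~$X$ (including~$X$ itself) and the sets~$\C^{(0)}, \dots, \C^{(r)}$ partition~$\C$, it remains to compute~$E (\card (\C^{(j)}))$ for each~$j$ and to sum.

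For~$\C^{(0)}$, the subtree rooted at~$X$ extends~$R - r$ further generations, the expected number of its vertices at distance~$k$ from~$X$ is~$\mu^k$ by the branching property, and each is reached with probability~$p^k$, so that~$E (\card (\C^{(0)})) = \sum_{k = 0}^{R - r} (\mu p)^k$. For~$j \geq 1$, conditionally on the~$j$ upward arrows from~$X$ to~$a_j$ being open --- an event of probability~$q^j$ --- the set~$\C^{(j)}$ consists of~$a_j$ together with, for each child~$c \neq a_{j-1}$ of~$a_j$ whose connecting arrow is open, the infected descendants of~$c$. The vertex~$a_j$ sits at distance~$r - j$ from the root, so such a child~$c$ roots a subtree of depth~$R - r + j - 1$ with expected reachable size~$\sum_{k = 0}^{R - r + j - 1} (\mu p)^k$, its connecting arrow is open with probability~$p$, and the expected number of children of~$a_j$ other than~$a_{j-1}$ equals~$\mu - 1$ by the branching property. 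Combining these, again by linearity and using independence of the subtrees hanging from distinct children, gives
$$ E (\card (\C^{(j)})) = q^j \bigg(1 + (\mu - 1)\, p \sum_{k = 0}^{R - r + j - 1} (\mu p)^k \bigg) = q^j \bigg(1 + (\mu - 1)\, p\, \frac{1 - (\mu p)^{R - r + j}}{1 - \mu p} \bigg). $$

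The result then follows from~$E_r (S) = \sum_{j = 0}^r E (\card (\C^{(j)}))$ after evaluating the resulting geometric sums. The part of~$\sum_{j = 1}^r q^j (1 + (\mu - 1) p / (1 - \mu p))$ not involving powers of~$\mu p$ combines with the~$j = 0$ term to produce, after the identity~$1 - \mu p + (\mu - 1) p = 1 - p$, the factor~$q (1 - q^r)(1 - p)/(1 - q)$; meanwhile the terms carrying~$(\mu p)^{R - r + j}$ form a geometric series in~$\mu pq$ which, reassembled with the~$-(\mu p)^{R - r + 1}$ coming from the~$j = 0$ term and simplified via~$1 - \mu pq + (\mu - 1) pq = 1 - pq$, collapses to~$-(\mu p)^{R - r + 1}(1 - pq(1 + (\mu - 1)(\mu pq)^r))/(1 - \mu pq)$. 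Dividing through by~$1 - \mu p$ yields the stated expression. The conceptual decomposition is short; the labor lies in this (routine but delicate) geometric-series bookkeeping, and the one genuinely probabilistic point to be careful about is that each ancestor~$a_j$ carries the unbiased offspring law --- so that its number of ``other'' children has mean exactly~$\mu - 1$ and the attached subtrees are ordinary Galton--Watson trees --- which is precisely where the hypothesis that the source sits at a prescribed distance~$r$, rather than, say, uniformly among the vertices of a fixed level, enters through the branching property.
\end{proofof}
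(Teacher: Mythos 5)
Your proposal is correct and follows essentially the same route as the paper: your partition of $\C$ by the nearest common ancestor $a_j$ of $y$ and the source is exactly the paper's decomposition of the cluster into the subtree below the source plus the disjoint subtrees $S(x_{r-i}\setminus x_{r-i+1})$ hanging off the ancestral path, and the final geometric-series algebra (including the identities $1-\mu p+(\mu-1)p = 1-p$ and $1+ (\mu-1)pq\,\frac{1-(\mu pq)^r}{1-\mu pq} = \frac{1-pq(1+(\mu-1)(\mu pq)^r)}{1-\mu pq}$) is the same. The only difference is bookkeeping: you fold the probability $q^j$ of reaching $a_j$ directly into $E(\card(\C^{(j)}))$ and evaluate each piece by summing infection probabilities over expected vertex counts, whereas the paper conditions explicitly on the height $D$ of the highest infected ancestor and invokes its subtree-moment lemmas; these are equivalent computations of the same quantities.
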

 Note that setting~$p = q$ in the theorem gives
 $$ E_r (S) = \frac{1}{1 - \mu p} \bigg(1 + p (1 - p^r) - (\mu p)^{R - r + 1} \bigg(\frac{1 - p^2 (1 + (\mu - 1)(\mu p^2)^r)}{1 - \mu p^2} \bigg) \bigg), $$
 which is exactly the expression found in~\cite[Theorem~4]{jevtic_lanchier_2020}, but we point out that, even though our result extends the result in~\cite{jevtic_lanchier_2020} to the asymmetric case, our approach leads to a much shorter and more elegant proof.
 More precisely, the proof in~\cite{jevtic_lanchier_2020} relies on a tedious combinatorial argument that consists in counting the number of open paths of a given length starting from the source of the infection whereas our proof consists in finding the infected vertices along the path going from the source of the infection to the root of the tree and then partitioning the cluster of infected vertices into (disjoint) subtrees starting from each of these vertices. \vspace*{5pt} \\
\noindent {\bf Second moment.}
 Our approach to compute the second moment is similar but relies in addition on independence.
 More precisely, writing again the cluster of infected vertices as a disjoint union of subtrees, the second moment can be computed using that the sizes of these subtrees are independent random variables.
 To express the second moment of the cluster size, we let
\begin{equation}
\label{eq:parameters-1}
  \mu_+ = \mu p \quad \hbox{and} \quad \mu_- = (\mu - 1) p,
\end{equation}
 quantities that will be interpreted later as the mean number of infected offspring in certain subtrees of the local area network, and
\begin{equation}
\label{eq:parameters-2}
  \sigma_+^2 = p (1 - p) \mu + p^2 \sigma^2 \quad \hbox{and} \quad \sigma_-^2 = p (1 - p)(\mu - 1) + p^2 \sigma^2,
\end{equation}
 quantities that will be interpreted later as the variance of the number of infected offspring in certain subtrees of the local area network.
 For all~$j = 0, 1, \ldots, r$, we also define
\begin{equation}
\label{eq:parameters-3}
  \begin{array}{rcl}
  \mu_{1, j} & \n = \n & \displaystyle \frac{1 - \mu_+^{R - j + 1}}{1 - \mu_+} \vspace*{4pt} \\
  \mu_{2, j} & \n = \n & \displaystyle \frac{\sigma_+^2}{(1 - \mu_+)^2} \ \bigg(\frac{1 - \mu_+^{2 (R - j) + 1}}{1 - \mu_+} - (2 (R - j) + 1) \mu_+^{R - j} \bigg) + \bigg(\frac{1 - \mu_+^{R - j + 1}}{1 - \mu_+} \bigg)^2. \end{array}
\end{equation}
\begin{theorem}-- \
\label{th:second}
 The conditional second moment on the tree with radius~$R$ given that the infection starts at distance~$r$ from the root is equal to
 $$ \begin{array}{rcl}
    \displaystyle E_r (S^2) = \sum_{k = 0}^r \bigg(\mu_{2, r} & \n + \n &
    \displaystyle 2 \mu_{1, r} \ \sum_{i = 1}^k \ (1 + \mu_- \mu_{1, r - i + 1}) \vspace*{-4pt} \\ & \n + \n &
    \displaystyle \sum_{i = 1}^k \ (1 + 2 \mu_- \mu_{1, r - i + 1} + \mu_- \mu_{2, r - i + 1} + (\sigma_-^2 + \mu_-^2 - \mu_-)(\mu_{1, r - i + 1})^2) \vspace*{0pt} \\ & \n + \n &
    \displaystyle \sum_{i \neq j} \,(1 + \mu_- \mu_{1, r - i + 1})(1 + \mu_- \mu_{1, r - j + 1}) \bigg) \,q_k \end{array} $$
 where~$q_k = q^k (1 - q)$ for~$k = 0, 1, \ldots, r - 1$, and~$q_r = q^r$.
\end{theorem}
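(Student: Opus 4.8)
The plan is to follow the template of the proof of Theorem~\ref{th:first}: condition on how far the contagion climbs toward the root, decompose the infected cluster $\C$ into disjoint subtrees hanging off the ancestors of the source, and then exploit that the sizes of these subtrees are \emph{independent}. Write $u_0, u_1, \ldots, u_r$ for the self-avoiding path running from the source of the infection $u_0$ (which, by assumption, lies at distance $r$ from the root) up to the root $u_r$, so that $u_j$ is at distance $r - j$ from the root and at distance $j$ from the source. Since the tree is acyclic, the contagion reaches $u_j$ if and only if all of the arrows $u_0 \to u_1 \to \cdots \to u_j$ in the offspring-to-parent direction are open; in particular the infected vertices on this path form a prefix $u_0, \ldots, u_K$, and the random index $K$ has distribution $P(K = k) = q^k (1 - q) = q_k$ for $0 \le k < r$ and $P(K = r) = q^r = q_r$. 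This accounts for the outer sum $\sum_{k = 0}^r (\,\cdot\,) \, q_k$ in the statement.

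Given $\{K = k\}$, let $B_0$ be the number of infected vertices in the subtree rooted at the source $u_0$ (counting $u_0$ itself), and, for $i = 1, \ldots, k$, let $B_i$ be the number of infected vertices consisting of $u_i$ together with all infected vertices lying in the subtrees rooted at the children of $u_i$ other than $u_{i - 1}$. Because the tree is acyclic, every infected vertex belongs to exactly one of $B_0, \ldots, B_k$, so that $S = B_0 + B_1 + \cdots + B_k$ on $\{K = k\}$. The structural point, on which the rest of the argument rests, is that conditionally on $\{K = k\}$ the variables $B_0, B_1, \ldots, B_k$ are \emph{independent}: $B_0$ is a function of the offspring counts and the parent-to-offspring arrows inside the subtree of $u_0$; for $i \ge 1$, $B_i$ is a function of the offspring count of $u_i$, of the parent-to-offspring arrows from $u_i$ to its children other than $u_{i - 1}$, and of the offspring counts and parent-to-offspring arrows inside those children's subtrees; the event $\{K = k\}$ is a function only of the $r$ arrows along the path taken in the offspring-to-parent direction; and these collections of underlying independent random variables are pairwise disjoint.

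It remains to compute the moments of the pieces. First, $B_0$ is the size of an ordinary downward percolation cluster on a Galton--Watson tree of radius $R - r$ whose root carries the offspring distribution; a one-step branching argument gives the recursions $\mu_{1, j} = 1 + \mu_+ \mu_{1, j + 1}$ and $\mu_{2, j} = 1 + 2 \mu_+ \mu_{1, j + 1} + \mu_+ \mu_{2, j + 1} + (\sigma_+^2 + \mu_+^2 - \mu_+)(\mu_{1, j + 1})^2$, with $\mu_{1, R} = \mu_{2, R} = 1$, and one checks by induction on $j$ that their solutions are the closed forms recorded in~\eqref{eq:parameters-3}; in particular $E (B_0) = \mu_{1, r}$ and $E (B_0^2) = \mu_{2, r}$. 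Next, for $i \ge 1$, one can write $B_i = 1 + \sum_{m = 1}^{W_i} \xi_{i, m} C_{i, m}$, where $W_i = D_i - 1$ with $D_i$ the number of children of $u_i$ (distributed according to the offspring distribution, so $E (W_i) = \mu - 1$ and $\var (W_i) = \sigma^2$), the $\xi_{i, m}$ are independent $\bernoulli (p)$ indicators of the corresponding parent-to-offspring arrow, and the $C_{i, m}$ are independent copies of the downward cluster size on a Galton--Watson tree of radius $R - r + i - 1$, whose first two moments are $\mu_{1, r - i + 1}$ and $\mu_{2, r - i + 1}$. Combining $\xi_{i, m} C_{i, m}$ into a single i.i.d.\ sequence and applying the standard formulas for the mean and variance of a random sum then yields
$$ E (B_i) = 1 + \mu_- \mu_{1, r - i + 1}, \qquad E (B_i^2) = 1 + 2 \mu_- \mu_{1, r - i + 1} + \mu_- \mu_{2, r - i + 1} + (\sigma_-^2 + \mu_-^2 - \mu_-)(\mu_{1, r - i + 1})^2, $$
the second identity being exactly where the particular form of $\sigma_-^2$ in~\eqref{eq:parameters-2} enters, through $\sigma_-^2 - \mu_- = p^2 (\sigma^2 - \mu + 1)$.

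Finally, conditioning on $\{K = k\}$ and using independence,
$$ E (S^2 \mid K = k) = \sum_{i = 0}^k E (B_i^2) + \sum_{\substack{0 \le i, j \le k \\ i \ne j}} E (B_i) \, E (B_j); $$
isolating the terms carrying the index $0$, namely $E (B_0^2) = \mu_{2, r}$ and the cross terms $2 E (B_0) \sum_{i = 1}^k E (B_i) = 2 \mu_{1, r} \sum_{i = 1}^k (1 + \mu_- \mu_{1, r - i + 1})$, reproduces the bracketed expression in the theorem, and multiplying by $q_k$ and summing over $k$ completes the proof. The main obstacle is not any individual computation but the bookkeeping in the middle step: pinning down the partition and the conditional-independence claim precisely --- in particular that $B_i$ uses the offspring count of $u_i$ \emph{minus one} yet remains independent of the path event $\{K = k\}$ --- and then pushing the compound second-moment identity for $B_i$ through to the exact constants appearing in~\eqref{eq:parameters-2}--\eqref{eq:parameters-3}.
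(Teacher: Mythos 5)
Your proposal is correct and follows essentially the same route as the paper: the same partition of the cluster into the downward subtree at the source plus the disjoint pieces $S(x_{r-i} \setminus x_{r-i+1})$ hanging off its infected ancestors, the same geometric law $q_k$ for the height of the infected prefix, the same conditional independence of the pieces, and the same compound-sum moment computations (your $E(B_i)$, $E(B_i^2)$ are exactly the paper's Lemma~\ref{lem:minus}). The only cosmetic difference is that you rederive $\mu_{1,j}$ and $\mu_{2,j}$ via a one-step branching recursion, whereas the paper simply imports these from Lemmas~8 and~9 of~\cite{jevtic_lanchier_2020}.
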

 Theorem~\ref{th:second} is the main contribution of this work as it gives an exact expression of the second moment, which leads to an exact pricing of the standard deviation principle, whereas~\cite[Theorem~5]{jevtic_lanchier_2020} only derived a rough upper bound for this pricing.
 Although the expression is implicit, it can be computed explicitly using a computer program~(see Figure~\ref{fig:second}) for each set of parameters. \\
\indent We also point out that the expressions in both Theorems~\ref{th:first} and~\ref{th:second} can be simplified when the local area network consists on the infinite Galton-Watson tree.
 In this case, the percolation process is supercritical when~$\mu p > 1$, meaning that the cluster of infected vertices is infinite with positive probability, so the first and second moments are both infinite.
 In the subcritical phase~$\mu p < 1$, it follows from the monotone convergence theorem that the first and second moments of the cluster size on the infinite tree can be obtained by taking the limit as~$R \to \infty$ in both theorems.
 In particular, in the infinite case, the first moment reduces to
 $$ E_r (S) = \frac{1}{1 - \mu p} \bigg(1 + q \bigg(\frac{1 - q^r}{1 - q} \bigg) (1 - p) \bigg) $$
 while using some algebra we get
\begin{equation}
\label{eq:moment-2}
  \begin{array}{rcl}
    \displaystyle E_r (S^2) & \n = \n &
    \displaystyle \frac{1}{(1 - \mu p)^2} \ \bigg(1 + \frac{p (1 - p) \mu + p^2 \sigma^2}{1 - \mu p} \bigg) \vspace*{8pt} \\ && \hspace*{5pt} + \
    \displaystyle \bigg(1 + \frac{2 (1 + (\mu - 1) p)}{1 - \mu p} + \frac{2 (\mu - 1) p + p (1 - p)(\mu - 1) + p^2 \sigma^2 + (\mu - 1)^2 p^2}{(1 - \mu p)^2} \vspace*{8pt} \\ && \hspace*{100pt} + \
    \displaystyle \frac{(p (1 - p) \mu + p^2 \sigma^2)(\mu - 1) p}{(1 - \mu p)^3} \bigg) \ q \bigg(\frac{1 - q^r}{1 - q} \bigg) \vspace*{8pt} \\ && \hspace*{5pt} + \
    \displaystyle \bigg(1 + \frac{(\mu - 1) p}{1 - \mu p} \bigg)^2 \ \frac{2q^2 (1 - rq^{r - 1} + (r - 1) q^r)}{(1 - q)^2} \end{array}
\end{equation}
 for the second moment on the infinite tree (see Section~\ref{sec:moment-2} for a proof). \vspace*{5pt} \\
\noindent {\bf Exponential decay of the diameter.}
 Another quantity of interest which also accounts for the geometry of the set of infected vertices is the diameter of the cluster~$\C$ defined as the maximum graph distance~(number of edges) between any two infected vertices:
 $$ \diam (\C) = \max \,\{d (x, y) : x, y \in \C \}. $$
 In this case, studying the spread of the infection from a dynamical point of view starting from the highest infected vertex and moving one generation down the tree at each time step, we can prove an exponential decay.
 More precisely, we have the following theorem.
\begin{theorem}-- \
\label{th:decay}
 Let~$\mu p < 1$.
 Then, the conditional probability that the diameter is larger than~$2n$ given that the infection starts at distance~$r$ from the root is
 $$ P_r (\diam (\C) \geq 2n) \leq \frac{1 - (q / \mu p)^{r + 1}}{1 - (q / \mu p)} \ (\mu p)^n \quad \hbox{for all} \quad n > r. $$
\end{theorem}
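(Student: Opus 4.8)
The plan is to locate the highest infected vertex and then push a ``front'' of the infection one generation down the tree at each step, controlling its expected size and applying Markov's inequality. Write $b_0, b_1, \ldots, b_r$ for the vertices on the path from the source to the root, with $b_0$ the source and $b_j$ at distance $j$ above it, and let $H$ denote the highest (closest to the root) infected vertex and $K = d(b_0, H)$, so $H = b_K$. The unique path in the tree from $b_0$ to $b_k$ runs through $b_1, \ldots, b_k$ and is oriented toward the root, hence it is open if and only if each of its $k$ edges is open in the offspring $\to$ parent direction; therefore $P_r(K \geq k) = q^k$ for $0 \leq k \leq r$, which gives $P_r(K = k) = q_k$ with exactly the weights $q_k$ appearing in Theorem~\ref{th:second}.

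The crux is a purely geometric reduction. Since $\C$ is a subtree of the random tree and $H$ is its vertex closest to the root, every infected vertex is a descendant of $H$, so $\diam(\C) \leq 2 \max_{y \in \C} d(H, y)$; moreover, because the path inside $\C$ from $H$ to any infected vertex is monotone decreasing, $\max_{y \in \C} d(H, y) \geq n$ holds if and only if $Z_n \geq 1$, where $Z_m$ is the number of infected vertices at distance exactly $m$ below $H$. Combining these two facts, $\{\diam(\C) \geq 2n\} \subseteq \{Z_n \geq 1\}$, so $P_r(\diam(\C) \geq 2n) \leq E_r(Z_n)$ by Markov's inequality and it suffices to bound $E_r(Z_n)$.

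To bound $E_r(Z_n)$ I would condition on $K = k$ and decompose the part of $\C$ lying below $H = b_k$ exactly as in the proofs of Theorems~\ref{th:first} and~\ref{th:second}: it is $\{b_0, \ldots, b_k\}$ together with the disjoint percolation clusters hanging off $b_0, \ldots, b_k$, where the cluster at $b_0$ grows into the subtrees of all $\mu$ children of $b_0$ on average but each cluster at $b_j$ with $j \geq 1$ into only $\mu - 1$ of them (the child $b_{j-1}$ is already on the path). Since $n > r \geq k$, a vertex at distance $n$ below $b_k$ cannot lie on the path $b_k, \ldots, b_0$; it lies in the subtree cluster of some $b_j$ at distance $n - k + j \geq 1$ below $b_j$. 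Counting generation by generation, with each infected vertex having on average $\mu p$ infected children, yields
\[ E_r(Z_n \mid K = k) = (\mu p)^{n-k} + \sum_{j=1}^{k} (\mu - 1)p\,(\mu p)^{n-k+j-1} = (\mu p)^{n-k}\bigg(1 + \frac{(\mu - 1)p\,(1 - (\mu p)^k)}{1 - \mu p}\bigg), \]
the radius-$R$ truncation of the tree only removing terms and hence harmless for an upper bound. Equivalently, in the dynamical picture the front satisfies $Z_0 = 1$ and $E_r(Z_{t+1} \mid K = k, \text{front at level } t) = \mu p\, Z_t + (1 - p)\mathbf{1}\{t < k\}$, the extra $1 - p$ accounting for the single guaranteed infected child $b_{k-t-1}$ of the path vertex $b_{k-t}$; solving this recursion gives the same expression.

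It then remains to sum over $k$. Writing $\rho = \mu p$, $x = q/\rho$ and $\alpha = (\mu - 1)p/(1 - \rho)$, one has $E_r(Z_n \mid K = k) = (1 + \alpha)\rho^{n-k} - \alpha\rho^n$ and $\sum_{k=0}^r q_k \rho^{n-k} = \rho^n\big(\sum_{j=0}^r x^j - q\sum_{j=0}^{r-1} x^j\big)$; substituting, the desired bound $\sum_{k=0}^r q_k E_r(Z_n \mid K = k) \leq \rho^n \sum_{j=0}^r x^j = \frac{1 - (q/\mu p)^{r+1}}{1 - q/\mu p}(\mu p)^n$ collapses after cancellation to $\alpha x \leq (1 + \alpha)q$, i.e.\ to $(\mu - 1)p \leq \mu p$, which is trivial. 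I expect the substance of the argument to be the geometric reduction of the second paragraph — once one sees that $Z_n \geq 1$ is forced, the rest is bookkeeping — while the only genuinely delicate point is this final summation: a crude union bound over the $k+1$ subtree clusters would lose a factor $\tfrac{1-p}{1-\mu p} > 1$, and recovering the sharp constant requires both the exact value of $E_r(Z_n \mid K = k)$ and the precise weights $q_k$ (not the cruder $q^k$), the cancellation being precisely what reduces the inequality to $-p \leq 0$.
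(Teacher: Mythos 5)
Your argument is correct and, at its core, it is the same as the paper's: both proofs reduce the event $\{\diam(\C)\ge 2n\}$ to the event that some infected vertex lies exactly $n$ generations below the highest infected vertex, bound the probability by the expected number of such vertices via Markov's inequality, and control that expectation through the one-step mean recursion $E(Z_{t+1}\mid K=k,\,Z_t)=\mu p\,Z_t+(1-p)\mathbf{1}\{t<k\}$, which is precisely the conditional statement inside the proof of Lemma~\ref{lem:decay}. Where you differ is the bookkeeping: the paper first averages over the height of the highest infected vertex, iterates the resulting recursion, and reaches the stated constant simply by discarding the factor $1-p\le 1$; you instead solve the conditional recursion exactly, getting $E_r(Z_n\mid K=k)=(\mu p)^{n-k}\left(1+\frac{(\mu-1)p\,(1-(\mu p)^k)}{1-\mu p}\right)$, and then sum against the exact weights $q_k$. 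This buys an exact conditional front size and shows the summation introduces no slack, but it costs a final algebraic comparison, and that is where your write-up slips: with $\alpha=(\mu-1)p/(1-\mu p)$ and $x=q/\mu p$, the inequality $\alpha x\le(1+\alpha)q$ is \emph{not} equivalent to $(\mu-1)p\le\mu p$ or to $-p\le 0$; since $(1+\alpha)q=\frac{(1-p)q}{1-\mu p}$, it reads $\frac{\mu-1}{\mu}\le 1-p$, i.e.\ $\mu p\le 1$. So the last step is not parameter-free trivia but is exactly where the hypothesis $\mu p<1$ enters (by contrast, the paper's iterate-and-bound derivation yields the displayed inequality without using subcriticality at all). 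Since $\mu p<1$ is assumed in the theorem, your proof stands; just state the final reduction correctly.
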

 The theorem indeed implies that, in the subcritical phase~$\mu p < 1$, the tail distribution of the diameter of the cluster of infected vertices decays exponentially. \vspace*{5pt} \\
\noindent {\bf Insurance premiums.}
\begin{figure}[t!]
\centering
\scalebox{0.30}{\input{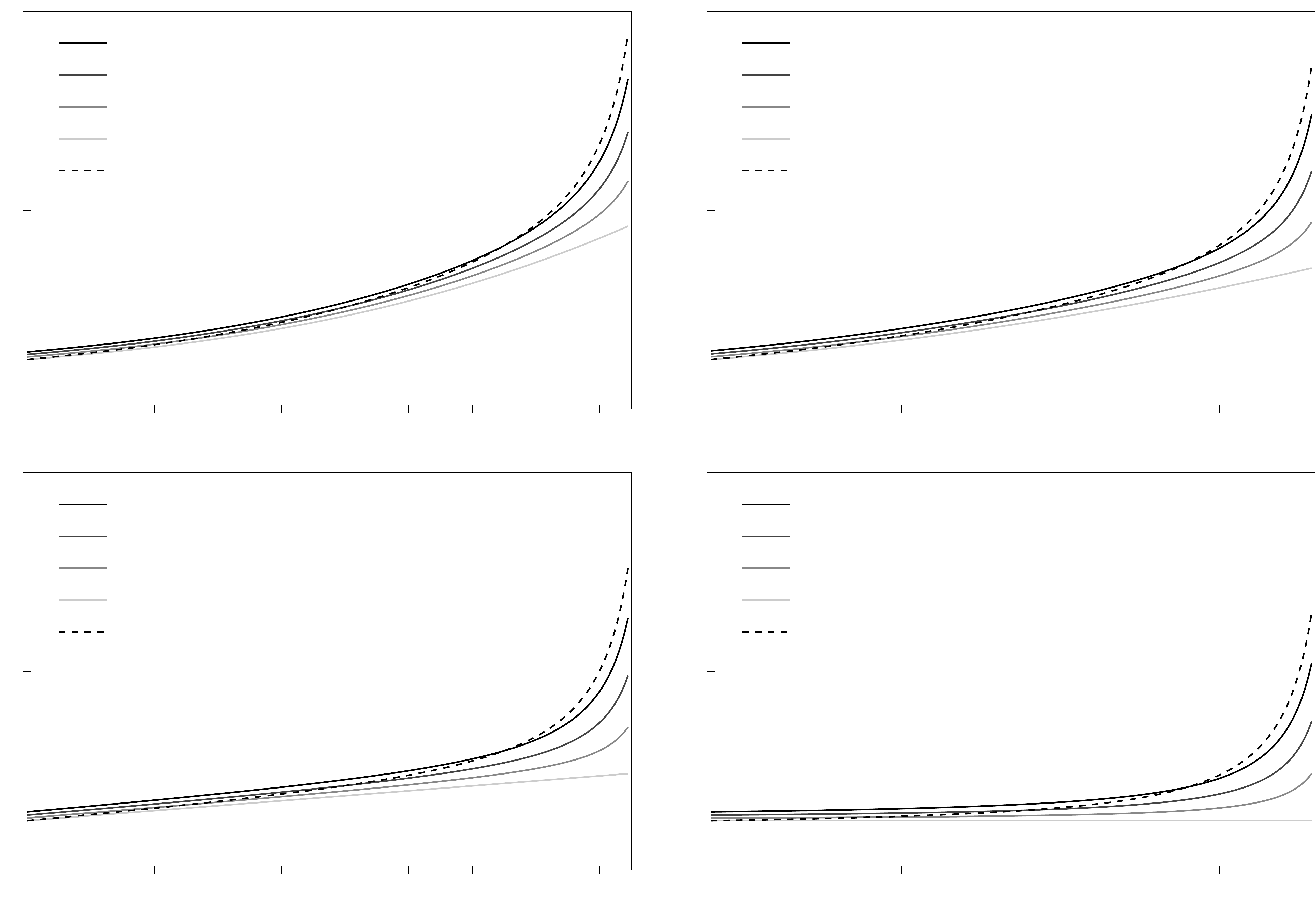_t}}
\caption{\upshape{First moment of the cluster size as a function of~$p$ for various values of~$q$ and~$r$.
 In each picture, the mean and variance of the offspring distribution are~$\mu = \sigma^2 = 5$ and the radius of the tree is~$R = 4$.}}
\label{fig:first}
\end{figure}
\begin{figure}[t!]
\centering
\scalebox{0.30}{\input{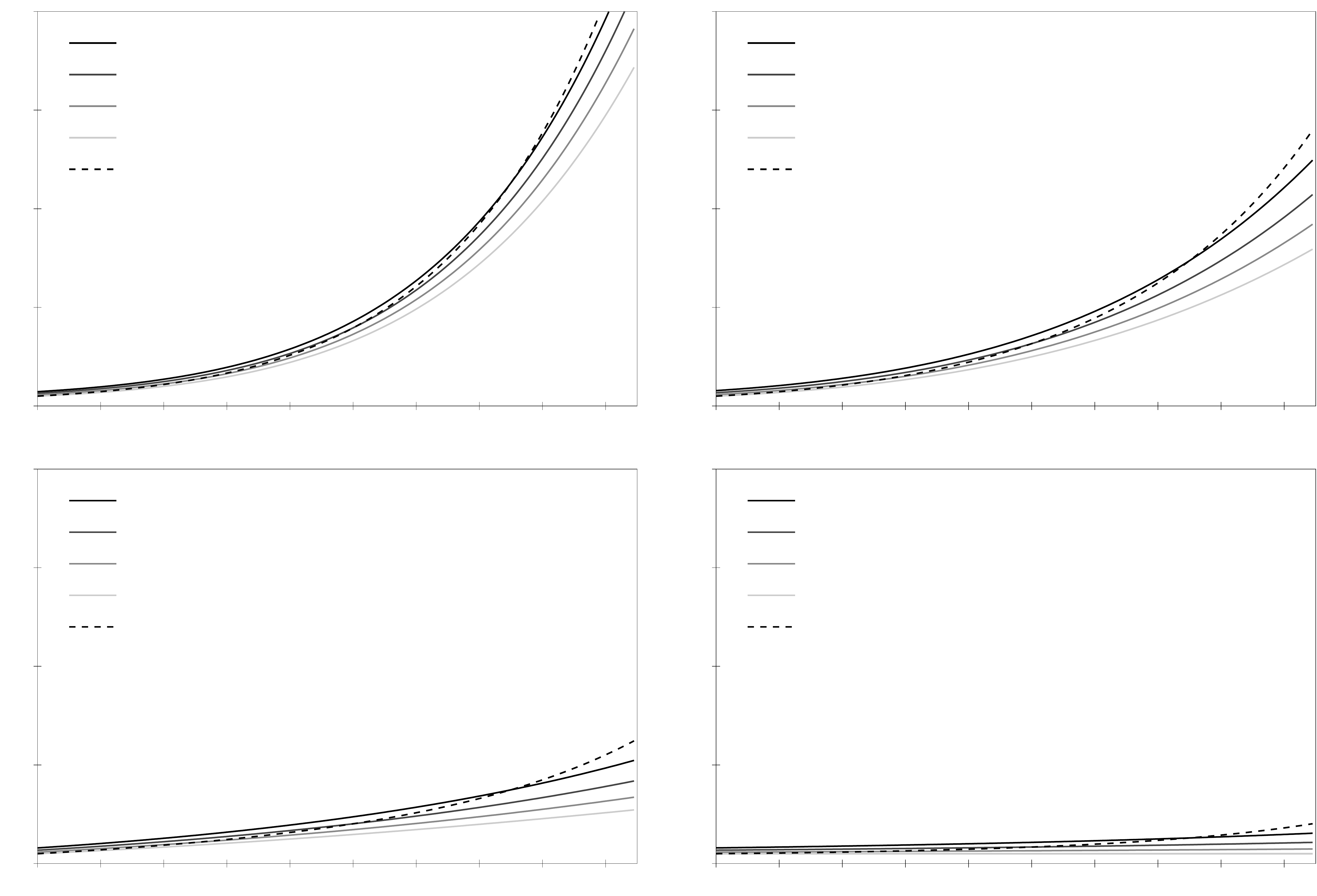_t}}
\caption{\upshape{Second moment of the cluster size as a function of~$p$ for various values of~$q$ and~$r$.
 In each picture, the mean and variance of the offspring distribution are~$\mu = \sigma^2 = 5$ and the radius of the tree is~$R = 4$.}}
\label{fig:second}
\end{figure}
 Combining Theorems~\ref{th:loss}--\ref{th:second}, we can now perform an exact pricing of cyber risk insurance.
 Following~\cite{jevtic_lanchier_2020}, we consider the three pricing principles
 $$ \begin{array}{rl}
    \hbox{actuarial fair premium:} & P = E (L_1) \vspace*{4pt} \\
    \hbox{expectation principle:} & P = E (L_1) + \delta E (L_1) \vspace*{4pt} \\
    \hbox{standard deviation principle:} & P = E (L_1) + \delta \sqrt{\var (L_1)} \end{array} $$
 where~$L_1$ represents the aggregate loss per unit of time.
 According to Theorem~\ref{th:loss},
\begin{equation}
\label{eq:unit-loss}
  E (L_1) = \lambda E (S) E (c) \quad \hbox{and} \quad \sqrt{\var (L_1)} = \sqrt{\lambda E (S) \var (c) + \lambda E (S^2) (E (c))^2},
\end{equation}
 showing that all three insurance premiums are functions of the system parameters and the first and second moments of the cluster size.
 In particular, for each set of parameters (the rate of occurrence of the cyber attacks, the mean and variance of the offspring distribution and the local costs, etc.), all three premiums can be computed exactly using Theorems~\ref{th:first} and~\ref{th:second}.
 Because the expressions of the first two moments in the theorems are quite complicated, we refer the reader to Figures~\ref{fig:first} and~\ref{fig:second} for pictures of the first and second moments as functions of the parameters~$p, q$ and~$r$, for a fixed value of the mean and variance of the offspring distribution.
 The dashed curves in the pictures correspond to the symmetric case~$p = q$ considered in~\cite{jevtic_lanchier_2020} where the first moment was computed exactly while only a very rough upper bound for the second moment was derived.
 In contrast, the approach used in this paper leads to exact insurance premiums for all three principles. \\
\indent Not surprisingly, it follows from~\eqref{eq:unit-loss} that all three premiums are nondecreasing with respect to the rate~$\lambda$ and the mean and variance of the local costs.
 The figures also show that both the first and the second moments of the cluster size are nondecreasing with respect to~$p$ and~$q$ and so are the three premiums.
 This result is intuitively clear and can be proved rigorously using a popular technique in probability theory called coupling, which consists in this case in constructing bond percolation processes with different parameters on the same probability space.
 We note however that the cluster size and the insurance premiums are not always increasing or always decreasing with respect to~$r$, the distance from the root to the source of the infection.
 Indeed,
\begin{itemize}
 \item When~$p = 1$ and~$q = 0$, the set of infected vertices consists of the subtree starting from the source of the infection going away from the root are therefore, in this case, the cluster size and the premiums are decreasing with respect to~$r$. \vspace*{4pt}
 \item When~$p = 0$ and~$q = 1$, the set of infected vertices consists of the unique path going from the source of the infection to the root of the tree are therefore, in this case, the cluster size and the premiums are increasing with respect to~$r$.
\end{itemize}
 The rest of this paper is devoted to the proof of the four theorems.

%%%%%%%%%%%%%%%%%%%%%%%%%%%%%%%%%%%%%%%%%%%%%%%%%%%%%%%%%%%%%%%%%%%%%%%%%%%%%%%%%%%%%%%%%%%%%%%%%%%%%%%%%%%%%%%%%%%%%%%%%%%%%%%%%%%%%%%%%%%%%%%%%%%%%%%%%%%%%%%%%%%%%%%%%%%%

\section{Proof of Theorem~\ref{th:loss} (aggregate loss)}
 To prove Theorem~\ref{th:loss}, we first observe that, because the Galton-Watson trees, percolation processes and local costs are independent and identically distributed across time, the consecutive costs~$C_i$ are independent and identically distributed as well.
 In particular, letting~$C$ be the common distribution of the random variables~$C_i$ and conditioning on the number of cyber attacks~$N_t$, we get
\begin{equation}
\label{eq:loss-1}
  \begin{array}{rcl}
       E (L_t \,| \,N_t = n) & \n = \n & E (C_1 + \cdots + C_{N_t} \,| \,N_t = n) = n E (C) \vspace*{4pt} \\
    \var (L_t \,| \,N_t = n) & \n = \n & \var (C_1 + \cdots + C_{N_t} \,| \,N_t = n) = n \var (C). \end{array}
\end{equation}
 The first equation in~\eqref{eq:loss-1} implies that
\begin{equation}
\label{eq:loss-2}
  E (L_t) = E (E (L_t \,| \,N_t)) = E (N_t E (C)) = E (N_t) E (C)
\end{equation}
 while using also the second equation in~\eqref{eq:loss-1} and the law of total variance,
\begin{equation}
\label{eq:loss-3}
  \begin{array}{rcl}
  \var (L_t) & \n = \n & E (\var (L_t \,| \,N_t)) + \var (E (L_t \,| \,N_t)) = E (N_t \var (C)) + \var (N_t E (C)) \vspace*{4pt} \\
             & \n = \n & E (N_t) \var (C) + \var (N_t) (E (C))^2. \end{array}
\end{equation}
 Using that the local costs are independent and identically distributed across the local area network, and conditioning on the size~$S$ of a single cyber attack, we also have
\begin{equation}
\label{eq:loss-4}
  E (C \,| \,S = s) = s E (c) \quad \hbox{and} \quad \var (C \,| \,S = s) = s \var (c).
\end{equation}
 The first equation in~\eqref{eq:loss-4} implies that
\begin{equation}
\label{eq:loss-5}
  E (C) = E (E (C \,| \,S)) = E (S E (c)) = E (S) E (c)
\end{equation}
 while using also the second equation in~\eqref{eq:loss-4} and the law of total variance,
\begin{equation}
\label{eq:loss-6}
  \begin{array}{rcl}
  \var (C) & \n = \n & E (\var (C \,| \,S)) + \var (E (C \,| \,S)) = E (S \var (c)) + \var (S E (c)) \vspace*{4pt} \\
           & \n = \n & E (S) \var (c) + \var (S) (E (c))^2. \end{array}
\end{equation}
 Finally, using that~$E (N_t) = \var (N_t) = \lambda t$, and combining~\eqref{eq:loss-2} and~\eqref{eq:loss-5}, we get
 $$ E (L_t) = \lambda t E (C) = \lambda t E (S) E (c). $$
 Combining~\eqref{eq:loss-3}, \eqref{eq:loss-5} and~\eqref{eq:loss-6}, and using that~$\var (S) + (E (S))^2 = E (S^2)$, we get
 $$ \begin{array}{rcl}
    \var (L_t) & \n = \n & \lambda t \var (C) + \lambda t (E (C))^2 \vspace*{4pt} \\
               & \n = \n & \lambda t E (S) \var (c) + \lambda t \var (S) (E (c))^2 + \lambda t (E (S) E (c))^2 \vspace*{4pt} \\
               & \n = \n & \lambda t E (S) \var (c) + \lambda t E (S^2) (E (c))^2. \end{array} $$
 This completes the proof of the theorem.

%%%%%%%%%%%%%%%%%%%%%%%%%%%%%%%%%%%%%%%%%%%%%%%%%%%%%%%%%%%%%%%%%%%%%%%%%%%%%%%%%%%%%%%%%%%%%%%%%%%%%%%%%%%%%%%%%%%%%%%%%%%%%%%%%%%%%%%%%%%%%%%%%%%%%%%%%%%%%%%%%%%%%%%%%%%%

\section{Partition into disjoint subtrees}
 To get ready for the proofs of Theorems~\ref{th:first} and~\ref{th:second} in the next two sections, we first describe the partition of the set of infected vertices into disjoint subtrees and collect several useful preliminary results that explain the parameters introduced in~\eqref{eq:parameters-1}--\eqref{eq:parameters-3}.
 More precisely, we study the distribution of the random number of subtrees and compute the first and second moment of the size of these subtrees.
 From now on, we assume that the infection starts at a vertex~$x$ with~$d (0, x) = r$.
 By spherical symmetry, the specific choice of~$x$ is unimportant as long as the vertex is at distance~$r$ from the root.
 There is a unique directed path
 $$ x_0 = 0 \to x_1 \to x_2 \to \cdots \to x_{r - 1} \to x_r = x $$
 of length~$r$ going from the root to vertex~$x$ and we let
 $$ D = \max \,\{i = 0, 1, \ldots, r : x_{r - i} \ \hbox{is infected} \}. $$
 This is the distance between the source of the infection and the highest infected vertex, and we refer the reader to Figure~\ref{fig:tree} for a picture.
 The following lemma gives preliminary results about the random variable~$D$ that will be useful later to prove the theorems.
\begin{figure}[t!]
\centering
\scalebox{0.65}{\input{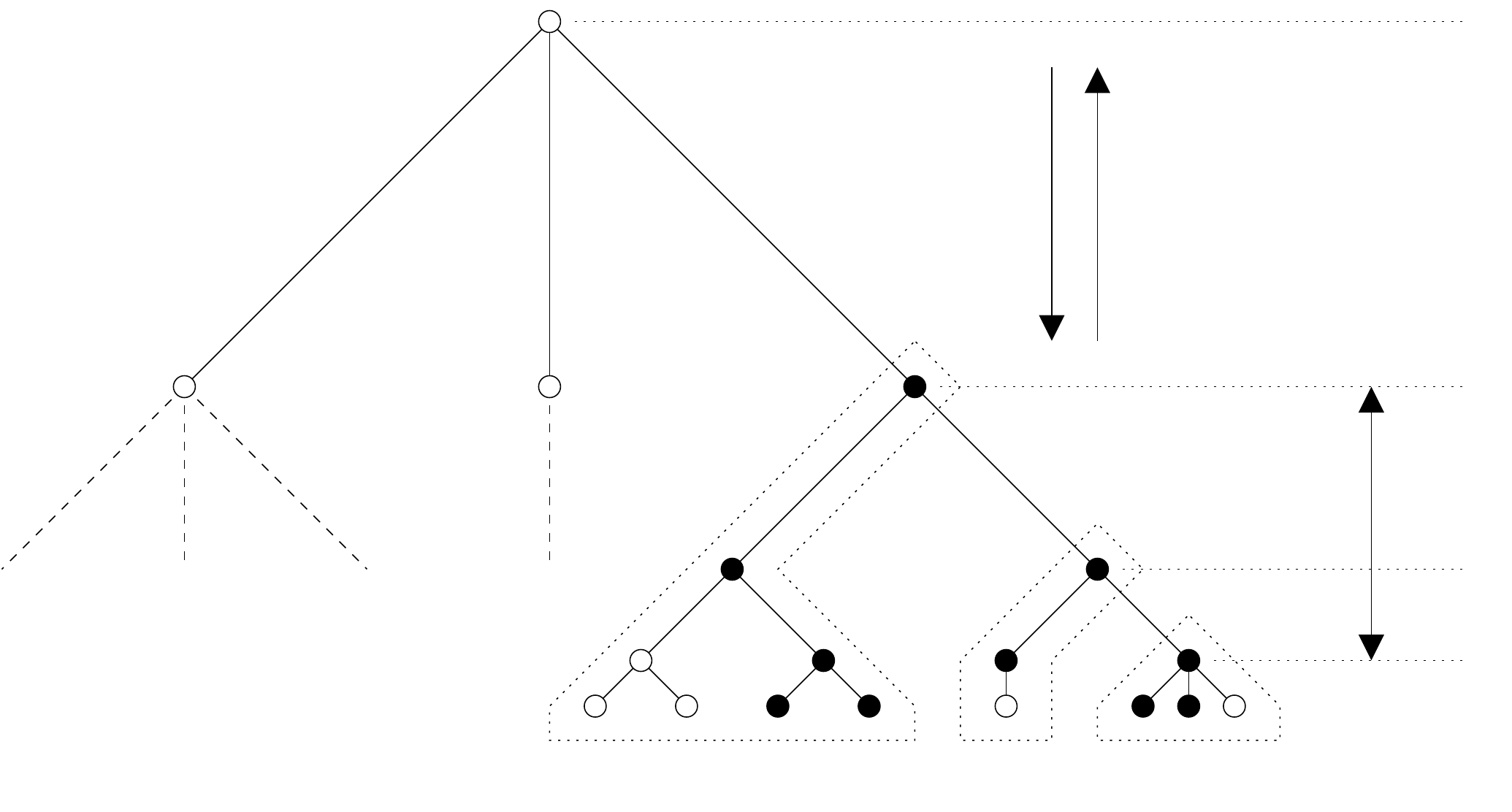_t}}
\caption{\upshape{Picture of the partition into disjoint subtrees used to prove Theorems~\ref{th:first} and~\ref{th:second}.
 The black vertices represent the set of infected vertices while the white vertices are not infected.
 In our example, the infection starts from~$x_3$ and spreads up to~$x_1$, which results in a partition of the cluster of infected vertices into three disjoint subtrees.
 Starting from the source of the infection, the numbers of infected vertices in the subtrees are~3, 2 and 5, respectively.}}
\label{fig:tree}
\end{figure}
\begin{lemma}-- \
\label{lem:D}
 We have
 $$ E (D) = q \bigg(\frac{1 - q^r}{1 - q} \bigg) \quad \hbox{and} \quad E (D (D - 1)) = \frac{2q^2 (1 - rq^{r - 1} + (r - 1) q^r)}{(1 - q)^2}. $$
\end{lemma}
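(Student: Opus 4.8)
The plan is to recognize $D$ as a truncated geometric random variable by computing its tail probabilities explicitly, and then to read off both moments from the standard tail–sum identities $E(D) = \sum_{i \ge 1} P(D \ge i)$ and $E(D(D-1)) = 2\sum_{i \ge 1} i\,P(D \ge i+1)$.

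First I would analyze the event $\{D \ge i\}$ for $0 \le i \le r$. By definition $D \ge i$ means that the ancestor $x_{r-i}$ of the source is infected, i.e.\ that there is a directed open path from $x = x_r$ to $x_{r-i}$. Since $\T$ is a tree, the only path between these two vertices is the geodesic $x_r \to x_{r-1} \to \cdots \to x_{r-i}$, which runs toward the root and hence traverses the $i$ arrows in the offspring $\to$ parent direction; by the description of the bidirectional percolation process these $i$ arrows are open independently, each with probability $q$. Therefore $x_{r-i} \in \C$ precisely when all $i$ of them are open, so that $P(D \ge i) = q^i$ for $0 \le i \le r$ (and of course $P(D \ge i) = 0$ for $i > r$, since $D \le r$ always and $x_r$ is infected so $D \ge 0$). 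Note these events are nested, which is consistent with $D$ being a well-defined maximum.

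Given the tail formula, $E(D) = \sum_{i=1}^{r} q^i = q(1 - q^r)/(1-q)$ is immediate. For the factorial moment, I would write $P(D \ge i+1) = \sum_{k \ge i+1} P(D = k)$ and exchange the order of summation to get $E(D(D-1)) = 2\sum_{i=1}^{r-1} i\,q^{i+1} = 2q\sum_{i=1}^{r-1} i\,q^i$, and then evaluate the finite sum $\sum_{i=1}^{n} i\,q^i = q\bigl(1 - (n+1)q^n + nq^{n+1}\bigr)/(1-q)^2$ (obtained by differentiating $\sum_{i=0}^{n} q^i$, or by summation by parts) with $n = r-1$. Substituting gives exactly $\frac{2q^2(1 - rq^{r-1} + (r-1)q^r)}{(1-q)^2}$.

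The only step that is more than routine is the identification of $\{D \ge i\}$ with the event that $i$ specific independent arrows are open; this is where the tree structure is essential, since uniqueness of paths in $\T$ guarantees that infection of an ancestor cannot be assisted by any other edges and that the relevant arrows are distinct across different values of $i$. Everything after that is elementary manipulation of geometric series, so I do not expect any genuine obstacle.
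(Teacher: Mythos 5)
Your proposal is correct. The probabilistic core is the same as in the paper: both arguments rest on the observation that reaching the ancestor $x_{r-i}$ requires the $i$ distinct offspring-to-parent arrows along the unique geodesic to be open independently with probability $q$, which in the paper is phrased as the probability mass function $P(D=k)=q^k(1-q)$ for $k<r$, $P(D=r)=q^r$, and in your write-up as the equivalent tail law $P(D\geq i)=q^i$ for $i\leq r$. Where you diverge is in the bookkeeping of the moments: the paper sums $kP(D=k)$ and $k(k-1)P(D=k)$ directly, which forces it to evaluate both the first and the second derivative of the finite geometric sum (the second-derivative identity being the messiest step of its proof), whereas your tail-sum identities $E(D)=\sum_{i\geq 1}P(D\geq i)$ and $E(D(D-1))=2\sum_{i\geq 1} i\,P(D\geq i+1)$ make the first moment immediate and reduce the factorial moment to the single standard sum $\sum_{i=1}^{r-1} i q^i$, i.e.\ only a first-derivative computation. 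This is a genuinely cleaner route to the same formulas; its only cost is that you must justify the tail-sum identities (routine) and be careful that the truncation at $r$ is handled by the tails rather than by a separate boundary term $P(D=r)=q^r$ as in the paper. One small imprecision in your closing remark: the arrow sets underlying $\{D\geq i\}$ for different $i$ are nested rather than distinct, but since you only use the marginal tail probabilities and never joint probabilities across different $i$, this does not affect the argument.
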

\begin{proof}
 Because the infection spreads toward the root of the random tree with probability~$q$ independently through each of the edges, we have
\begin{equation}
\label{eq:D}
  P (D = k) = q^k (1 - q) \quad \hbox{for} \ k = 0, 1, \ldots, r - 1, \quad \hbox{and} \quad P (D = r) = q^r.
\end{equation}
 In particular, using that
 $$ \sum_{k = 1}^{r - 1} \,k x^{k - 1} =
    \frac{\partial}{\partial x} \bigg(\sum_{k = 0}^{r - 1} x^k \bigg) =
    \frac{\partial}{\partial x} \bigg(\frac{1 - x^r}{1 - x} \bigg) = \frac{1 - rx^{r - 1} + (r - 1) x^r}{(1 - x)^2}, $$
 we deduce that the first moment is given by
 $$ \begin{array}{rcl}
      E (D) & \n = \n &
    \displaystyle \sum_{k = 0}^{r - 1} \,k q^k (1 - q) + r q^r = q (1 - q) \sum_{k = 1}^{r - 1} \,k q^{k - 1} + r q^r \vspace*{4pt} \\ & \n = \n &
    \displaystyle q (1 - q) \bigg(\frac{1 - rq^{r - 1} + (r - 1) q^r}{(1 - q)^2} \bigg) + r q^r = q \bigg(\frac{1 - q^r}{1 - q} \bigg). \end{array} $$
 Similarly, we have
 $$ \begin{array}{rcl}
    \displaystyle \sum_{k = 2}^{r - 1} \,k (k - 1) x^{k - 2} & \n = \n &
    \displaystyle \frac{\partial^2}{\partial x^2} \bigg(\sum_{k = 0}^{r - 1} x^k \bigg) =
    \displaystyle \frac{\partial}{\partial x} \bigg(\frac{1 - rx^{r - 1} + (r - 1) x^r}{(1 - x)^2} \bigg) \vspace*{8pt} \\ & \n = \n &
    \displaystyle \frac{2 - r (r - 1) x^{r - 2} + 2r (r - 2) x^{r - 1} - (r - 1)(r - 2) x^r}{(1 - x)^3} \end{array} $$
 from which it follows that
 $$ \begin{array}{rcl}
    \displaystyle E (D (D - 1)) & \n = \n &
    \displaystyle \sum_{k = 0}^{r - 1} \,k (k - 1) q^k (1 - q) + r (r - 1) q^r \vspace*{4pt} \\ & \n = \n &
    \displaystyle q^2 (1 - q) \ \frac{2 - r (r - 1) q^{r - 2} + 2r (r - 2) q^{r - 1} - (r - 1)(r - 2) q^r}{(1 - q)^3} \vspace*{4pt} \\ && \hspace*{50pt} + \
    \displaystyle q^2 \ \frac{r (r - 1) q^{r - 2} - 2r (r - 1) q^{r - 1} + r (r - 1) q^r}{(1 - q)^2} \vspace*{4pt} \\ & \n = \n &
    \displaystyle \frac{2q^2 (1 - rq^{r - 1} + (r - 1) q^r)}{(1 - q)^2}. \end{array} $$
 This completes the proof.
\end{proof} \\ \\
 Next, for~$j = 0, 1, \ldots, r$, we define the random variables
\begin{equation}
\label{eq:subtrees}
  \begin{array}{rcl}
    S (x_j) & \n = \n & \hbox{number of infected vertices in the subtree starting at~$x_j$} \vspace*{4pt} \\
    S (x_j \setminus x_{j + 1}) & \n = \n & \hbox{number of infected vertices in the subtree starting at~$x_j$} \\ &&
                                            \hbox{but excluding the subtree starting at~$x_{j + 1}$.} \end{array}
\end{equation}
 For instance, in the realization shown in Figure~\ref{fig:tree}, we have
 $$ S (x_3) = 3, \quad S (x_2 \setminus x_3) = 2, \quad S (x_1 \setminus x_2) = 5, \quad S (x_0 \setminus x_1) = 0. $$
 To estimate the first and second moments of the size of the infected cluster later, we now compute the first and second moments of the random variables in~\eqref{eq:subtrees}.
 To do this, let~$\xi_i = \bernoulli (p)$ be independent, and let~$Y$ be the offspring distribution, i.e., the random variable describing the number of edges starting from a given vertex and going away from the root.
 In particular,
 $$ X_+ = \xi_1 + \xi_2 + \cdots + \xi_Y \quad \hbox{and} \quad X_- = \xi_1 + \xi_2 + \cdots + \xi_{Y - 1} $$
 are the random variables describing the number of infected offspring of a given vertex and the number of infected offspring of a given vertex excluding a given offspring, respectively.
 The next two lemmas give the mean and the variance of these two random variables.
\begin{lemma}-- \
\label{lem:mean}
 We have~$E (X_+) = \mu_+$ and~$E (X_-) = \mu_-$ as defined in~\eqref{eq:parameters-1}.
\end{lemma}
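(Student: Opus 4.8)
The plan is to obtain both identities by conditioning on the offspring variable $Y$ and invoking Wald's identity, i.e.\ the fact that the expectation of a sum of i.i.d.\ terms with a random but \emph{independent} number of terms is the product of the expectations. Since the $\xi_i = \bernoulli(p)$ are independent of each other and of $Y$, on the event $\{Y = k\}$ linearity of expectation gives $E(\xi_1 + \cdots + \xi_k) = kp$; hence $E(X_+ \mid Y) = pY$, and taking expectations yields $E(X_+) = p\,E(Y) = \mu p = \mu_+$.

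For $X_- = \xi_1 + \cdots + \xi_{Y-1}$ the computation is the same after replacing $Y$ by $Y-1$: here one should note that $Y - 1 \ge 0$ almost surely because the standing assumption $p_0 = 0$ forces $Y \ge 1$, so the sum is well defined (it is the empty sum, equal to $0$, exactly when $Y = 1$). Conditioning on $Y$ gives $E(X_- \mid Y) = p(Y-1)$, and therefore $E(X_-) = p\,(E(Y) - 1) = (\mu - 1)p = \mu_-$.

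There is no genuine obstacle in this lemma; it is a routine conditioning argument, the only subtlety being the bookkeeping around $Y = 1$ in the definition of $X_-$, which is taken care of by $p_0 = 0$. The more substantial computations, namely the variances of $X_+$ and $X_-$ and their identification with $\sigma_+^2$ and $\sigma_-^2$ in \eqref{eq:parameters-2}, are handled in the following lemma.
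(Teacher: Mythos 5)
Your proof is correct and follows essentially the same route as the paper: conditioning on $Y$, computing $E(X_\pm \mid Y) = pY$ (resp.\ $p(Y-1)$), and taking expectations via the tower property. The extra remark about $p_0 = 0$ guaranteeing $Y \ge 1$ is a fine (if unnecessary for the expectation computation) piece of bookkeeping that the paper leaves implicit.
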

\begin{proof}
 Conditioning on~$Y$, we get
 $$ E (X_+) = E (E (X_+ \,| \,Y)) = E (E (\xi_1 + \cdots + \xi_Y \,| \,Y)) = E (Y) E (\xi_n) = \mu p = \mu_+. $$
 Similarly, for the mean of~$X_-$,
 $$ E (X_-) = E (E (X_- \,| \,Y)) = E (E (\xi_1 + \cdots + \xi_{Y - 1} \,| \,Y)) = E (Y - 1) E (\xi_n) = (\mu - 1) p = \mu_-. $$
 This completes the proof.
\end{proof}
\begin{lemma}-- \
\label{lem:variance}
 We have~$\var (X_+) = \sigma_+^2$ and~$\var (X_-) = \sigma_-^2$ as defined in~\eqref{eq:parameters-2}.
\end{lemma}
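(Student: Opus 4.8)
The plan is to mirror the proof of Lemma~\ref{lem:mean} and apply the law of total variance, conditioning on the offspring random variable~$Y$. The key observation is that, given~$Y$, the variable~$X_+ = \xi_1 + \cdots + \xi_Y$ is a sum of~$Y$ independent~$\bernoulli(p)$ random variables, so that
$$ E (X_+ \,| \,Y) = Y p \quad \hbox{and} \quad \var (X_+ \,| \,Y) = Y p (1 - p). $$
Since~$E (Y) = \mu$ and~$\var (Y) = \sigma^2$, the conditional variance formula then gives
$$ \var (X_+) = E (\var (X_+ \,| \,Y)) + \var (E (X_+ \,| \,Y)) = p (1 - p) E (Y) + p^2 \var (Y) = p (1 - p) \mu + p^2 \sigma^2, $$
which is exactly~$\sigma_+^2$ as defined in~\eqref{eq:parameters-2}.

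For~$X_-$ the argument is identical with~$Y$ replaced throughout by~$Y - 1$: conditionally on~$Y$, the variable~$X_- = \xi_1 + \cdots + \xi_{Y - 1}$ is a sum of~$Y - 1$ independent~$\bernoulli(p)$ random variables (an empty, hence null, sum when~$Y = 1$, which is permitted since the assumption~$p_0 = 0$ guarantees~$Y \geq 1$), so~$E (X_- \,| \,Y) = (Y - 1) p$ and~$\var (X_- \,| \,Y) = (Y - 1) p (1 - p)$. Using that~$E (Y - 1) = \mu - 1$ and~$\var (Y - 1) = \var (Y) = \sigma^2$, the conditional variance formula yields
$$ \var (X_-) = p (1 - p)(\mu - 1) + p^2 \sigma^2 = \sigma_-^2. $$

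There is essentially no obstacle here; the only point deserving a word is the degenerate case~$Y = 1$ in the definition of~$X_-$, which is harmless. As an alternative one could avoid the conditional variance formula and instead compute~$E (X_+^2)$ directly, conditioning on~$Y$ and expanding~$(\xi_1 + \cdots + \xi_Y)^2$ into~$Y$ diagonal terms of mean~$p$ and~$Y (Y - 1)$ off-diagonal terms of mean~$p^2$, then subtracting~$(E (X_+))^2 = \mu_+^2$; this reproduces the same formulas but is marginally more computational, so I would keep the conditioning argument.
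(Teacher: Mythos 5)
Your proof is correct and follows essentially the same route as the paper: condition on~$Y$, note that~$\var (X_+ \,|\,Y) = Y p (1 - p)$ and~$E (X_+ \,|\,Y) = Yp$ (and the analogues with~$Y - 1$ for~$X_-$), and apply the law of total variance. The extra remark about the degenerate case~$Y = 1$ is harmless but not needed.
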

\begin{proof}
 Conditioning on~$Y$ and using the law of total variance, we get
 $$ \begin{array}{rcl}
    \var (X_+) & \n = \n & E (\var (X_+ \,| \,Y)) + \var (E (X_+ \,| \,Y)) \vspace*{4pt} \\
               & \n = \n & E (p (1 - p) Y) + \var (pY) = p (1 - p) \mu + p^2 \sigma^2 = \sigma_+^2. \end{array} $$
 Similarly, for the variance of~$X_-$,
 $$ \begin{array}{rcl}
    \var (X_-) & \n = \n & E (\var (X_- \,| \,Y)) + \var (E (X_- \,| \,Y)) \vspace*{4pt} \\
               & \n = \n & E (p (1 - p)(Y - 1)) + \var (p (Y - 1)) = p (1 - p)(\mu - 1) + p^2 \sigma^2 = \sigma_-^2. \end{array} $$
 This completes the proof.
\end{proof} \\ \\
 Now, applying~Lemmas~8 and~9 from~\cite{jevtic_lanchier_2020} implies that the first and second moments of the first set of random variables in~\eqref{eq:subtrees} are given respectively by
 $$ \begin{array}{rcl}
       E (S (x_j)) & \n = \n & \displaystyle \frac{1 - \mu_+^{R - j + 1}}{1 - \mu_+} \vspace*{4pt} \\
     E (S (x_j)^2) & \n = \n & \displaystyle \frac{\sigma_+^2}{(1 - \mu_+)^2} \ \bigg(\frac{1 - \mu_+^{2 (R - j) + 1}}{1 - \mu_+} - (2 (R - j) + 1) \mu_+^{R - j} \bigg) + \bigg(\frac{1 - \mu_+^{R - j + 1}}{1 - \mu_+} \bigg)^2, \end{array} $$
 which are the expressions in~\eqref{eq:parameters-3}.
 In particular, we have the following lemma.
\begin{lemma}-- \
\label{lem:plus}
 For all~$j = 0, 1, \ldots, r$, we have~$E (S (x_j)) = \mu_{1, j}$ and~$E (S (x_j)^2) = \mu_{2, j}$.
\end{lemma}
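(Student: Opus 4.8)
The plan is to identify the random variable $S(x_j)$ with the total progeny of a truncated Galton--Watson process and then read off its first two moments from results already available in the literature. First I would observe that the subtree of the random tree rooted at $x_j$ is itself a Galton--Watson tree of radius $R - j$: since $x_j$ lies at distance $j$ from the root, it has $R - j$ generations of descendants, and by the branching property the subtrees hanging from its offspring are independent copies of the construction. Inside this subtree the infection propagates only through the parent-to-offspring arrows, each of which is open with probability $p$ and independent of the tree and of one another, so the infected vertices of the subtree are exactly the open cluster of $x_j$. Setting $Z_0 = 1$ and letting $Z_i$ be the number of infected vertices at distance $i$ from $x_j$, one checks that, conditionally on the level-$i$ infected vertices, each of them contributes an independent copy of $X_+$ (the number of infected offspring of a vertex introduced just before Lemma~\ref{lem:mean}) to level $i + 1$. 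Hence $(Z_i)_{0 \leq i \leq R - j}$ is an ordinary Galton--Watson process started from one individual whose offspring law is that of $X_+$, and $S (x_j) = \sum_{i = 0}^{R - j} Z_i$ is its total progeny up to generation $R - j$.

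By Lemmas~\ref{lem:mean} and~\ref{lem:variance}, the offspring law $X_+$ has mean $\mu_+$ and variance $\sigma_+^2$. The first and second moments of the total progeny of a Galton--Watson process counted up to a fixed generation, as functions of the offspring mean, the offspring variance, and the truncation level, are precisely the content of Lemmas~8 and~9 in~\cite{jevtic_lanchier_2020}; applying them with parameters $\mu_+$, $\sigma_+^2$, and truncation level $R - j$ gives
$$ E (S (x_j)) = \frac{1 - \mu_+^{R - j + 1}}{1 - \mu_+} = \mu_{1, j} $$
and
$$ E (S (x_j)^2) = \frac{\sigma_+^2}{(1 - \mu_+)^2} \bigg(\frac{1 - \mu_+^{2 (R - j) + 1}}{1 - \mu_+} - (2 (R - j) + 1) \mu_+^{R - j} \bigg) + \bigg(\frac{1 - \mu_+^{R - j + 1}}{1 - \mu_+} \bigg)^2 = \mu_{2, j}, $$
which are exactly the quantities defined in~\eqref{eq:parameters-3}, as claimed.

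There is essentially no conceptual obstacle in this argument; the only step requiring a little care is the verification that $S (x_j)$ genuinely has the structure of a truncated Galton--Watson total progeny with offspring distribution $X_+$ and truncation depth exactly $R - j$, which is where the branching property of the tree and the edge-independence of the percolation are both used. Should one prefer a self-contained derivation that does not quote~\cite{jevtic_lanchier_2020}, the same two formulas follow from the elementary identities $E (Z_i) = \mu_+^i$ and $\var (Z_{i + 1}) = \mu_+^2 \var (Z_i) + \sigma_+^2 \mu_+^i$, together with $\mathrm{Cov}(Z_i, Z_\ell) = \mu_+^{\ell - i} \var (Z_i)$ for $i \leq \ell$, summed over $0 \leq i, \ell \leq R - j$; the only slightly delicate point there is the bookkeeping in that last sum, which relies on the differentiated geometric series identity already used in the proof of Lemma~\ref{lem:D}.
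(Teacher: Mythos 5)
Your proposal is correct and takes essentially the same route as the paper: the paper also identifies the infected vertices in the subtree rooted at $x_j$ with a Galton--Watson process of offspring law $X_+$ (mean $\mu_+$, variance $\sigma_+^2$) truncated at generation $R - j$, and then simply quotes Lemmas~8 and~9 of~\cite{jevtic_lanchier_2020} to obtain the two displayed moment formulas, i.e.\ the quantities $\mu_{1,j}$ and $\mu_{2,j}$ of~\eqref{eq:parameters-3}. Your careful verification of the branching structure (and the optional self-contained derivation via $E(Z_i) = \mu_+^i$ and the variance recursion) only spells out details the paper leaves implicit.
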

 We now look at the second set of random variables in~\eqref{eq:subtrees}.
\begin{lemma}-- \
\label{lem:minus}
 For all~$j = 0, 1, \ldots, r$, we have~$E (S (x_j \setminus x_{j + 1})) = 1 + \mu_- \mu_{1, j + 1}$ and
 $$ E (S (x_j \setminus x_{j + 1})^2) = 1 + 2 \mu_- \mu_{1, j + 1} + \mu_- \mu_{2, j + 1} + (\sigma_-^2 + \mu_-^2 - \mu_-)(\mu_{1, j + 1})^2. $$
\end{lemma}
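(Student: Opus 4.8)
The plan is to realize $S(x_j\setminus x_{j+1})$ as a random sum of independent subtree sizes and then read off its first two moments from the classical identities for the mean and variance of such a compound sum, much as the moments of $S(x_j)$ in Lemma~\ref{lem:plus} follow from Lemmas~8 and~9 in~\cite{jevtic_lanchier_2020}; here the relevant offspring count is $X_-$ rather than $X_+$, which is what produces the shifts appearing in the final formula.

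First I would record the decomposition. On the event that $x_j$ is reached from the source --- the only situation in which $S(x_j\setminus x_{j+1})$ contributes to the partition, and the event on which its moments enter the proof of Theorem~\ref{th:second} through the conditioning on $D$ --- the vertex $x_j$ is infected and $x_{j+1}$ is one of its $Y$ offspring. Removing the subtree rooted at $x_{j+1}$, the infected vertices that remain in the subtree rooted at $x_j$ are $x_j$ itself, together with, for each of the other $Y-1$ offspring of $x_j$ whose downward edge is open, that offspring and all of its infected descendants. Writing $X_-=\xi_1+\cdots+\xi_{Y-1}$ for the number of such infected offspring, as in the paragraph preceding Lemma~\ref{lem:mean}, and $Z_1,Z_2,\dots$ for the numbers of infected vertices in the subtrees they generate, this gives
$$ S(x_j\setminus x_{j+1}) = 1 + \sum_{m=1}^{X_-} Z_m. $$
The point that requires care --- and the only place where anything beyond routine bookkeeping happens --- is the independence structure underlying this identity: because the ambient graph is a tree, the unique path from the source to any vertex of these subtrees runs through $x_j$, so the $Z_m$ are independent copies of $S(x_{j+1})$, i.e.\ of a downward percolation cluster rooted at distance $j+1$ in a tree of radius $R$; moreover $X_-$, which is measurable with respect to the offspring number of $x_j$ and the states of the downward edges out of $x_j$, is independent of the $Z_m$ and of the event that $x_j$ is reached from the source, since these three pieces of randomness involve pairwise disjoint sets of edges and vertices.

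Granting the representation, the first moment is immediate from Wald's identity together with Lemmas~\ref{lem:mean} and~\ref{lem:plus}, namely $E(S(x_j\setminus x_{j+1}))=1+E(X_-)\,E(Z_1)=1+\mu_-\mu_{1,j+1}$. For the second moment I would set $T=\sum_{m=1}^{X_-}Z_m$, so that $E(S(x_j\setminus x_{j+1})^2)=1+2E(T)+E(T^2)$, and apply the standard variance formula for a random sum of independent, identically distributed terms,
$$ \var(T) = E(X_-)\,\var(Z_1) + \var(X_-)\,(E(Z_1))^2 = \mu_-(\mu_{2,j+1}-\mu_{1,j+1}^2) + \sigma_-^2\,\mu_{1,j+1}^2, $$
using Lemmas~\ref{lem:mean}, \ref{lem:variance} and~\ref{lem:plus} and the identity $\var(Z_1)=E(S(x_{j+1})^2)-(E(S(x_{j+1})))^2=\mu_{2,j+1}-\mu_{1,j+1}^2$. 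Then $E(T^2)=\var(T)+(E(T))^2=\mu_-\mu_{2,j+1}+(\sigma_-^2+\mu_-^2-\mu_-)\mu_{1,j+1}^2$, and substituting this together with $E(T)=\mu_-\mu_{1,j+1}$ into $E(S(x_j\setminus x_{j+1})^2)=1+2E(T)+E(T^2)$ yields the claimed expression. Thus the entire difficulty sits in the first step --- setting up and justifying the compound-sum representation from the tree structure --- after which the computation is identical in spirit to the one already carried out for $S(x_j)$; the boundary index $j=r$, for which $x_{j+1}$ is undefined, does not occur in the proofs of the theorems and may be ignored.
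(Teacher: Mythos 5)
Your proposal is correct and follows essentially the same route as the paper: the same decomposition $S(x_j\setminus x_{j+1}) = 1 + \sum_{m=1}^{X} S(y_m)$ with $X \stackrel{d}{=} X_-$ and the subtree sizes independent copies of $S(x_{j+1})$, combined with Lemmas~\ref{lem:mean}--\ref{lem:plus}. The only difference is presentational --- you invoke Wald's identity and the compound-sum variance formula, whereas the paper expands the square and conditions on $X$ directly, using $E(X(X-1)) = \sigma_-^2 + \mu_-^2 - \mu_-$; the computations are identical in substance.
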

\begin{proof}
 Letting~$y_1, y_2, \ldots, y_X$ be the infected offspring of~$x_j$ other than~$x_{j + 1}$,
\begin{equation}
\label{eq:partition}
 S (x_j \setminus x_{j + 1}) = 1 + S (y_1) + \cdots + S (y_X) \quad \hbox{and} \quad X = X_- \ \hbox{in distribution}.
\end{equation}
 In particular, conditioning on~$X$ and using Lemmas~\ref{lem:mean} and~\ref{lem:plus}, we obtain
 $$ \begin{array}{rcl}
      E (S (x_j \setminus x_{j + 1})) & \n = \n &
      E (E (1 + S (y_1) + \cdots + S (y_X) \,| \,X)) \vspace*{4pt} \\ & \n = \n &
      1 + E (X) E (S (y_1)) = 1 + \mu_- E (S (x_{j + 1})) = 1 + \mu_- \mu_{1, j + 1}. \end{array} $$
 Taking the square in~\eqref{eq:partition},
 $$ \begin{array}{rcl}
      S (x_j \setminus x_{j + 1})^2 & \n = \n &
    \displaystyle 1 + 2 \,\sum_{n = 1}^X S (y_n) + \bigg(\sum_{n = 1}^X S (y_n) \bigg)^2 \vspace*{4pt} \\ & \n = \n & 
    \displaystyle 1 + 2 \,\sum_{n = 1}^X S (y_n) + \sum_{n = 1}^X S (y_n)^2 + \sum_{n \neq m} S (y_n) S (y_m), \end{array} $$
 then conditioning on~$X$ and using independence as well as Lemmas~\ref{lem:mean}--\ref{lem:plus},
 $$ \begin{array}{rcl}
     E (S (x_j \setminus x_{j + 1})^2) & \n = \n &
     E (E (S (x_j \setminus x_{j + 1})^2 \,| \,X)) \vspace*{4pt} \\ & \n = \n &
     1 + 2 E (X) E (S (y_n)) + E (X) E (S (y_n)^2) + E (X (X - 1)) E (S (y_n))^2 \vspace*{4pt} \\ & \n = \n &
     1 + 2 \mu_- E (S (x_{j + 1})) + \mu_- E (S (x_{j + 1})^2) + (\sigma_-^2 + \mu_-^2 - \mu_-) E (S (x_{j + 1}))^2 \vspace*{4pt} \\ & \n = \n &
     1 + 2 \mu_- \mu_{1, j + 1} + \mu_- \mu_{2, j + 1} + (\sigma_-^2 + \mu_-^2 - \mu_-)(\mu_{1, j + 1})^2. \end{array} $$
 This completes the proof.
\end{proof}

%%%%%%%%%%%%%%%%%%%%%%%%%%%%%%%%%%%%%%%%%%%%%%%%%%%%%%%%%%%%%%%%%%%%%%%%%%%%%%%%%%%%%%%%%%%%%%%%%%%%%%%%%%%%%%%%%%%%%%%%%%%%%%%%%%%%%%%%%%%%%%%%%%%%%%%%%%%%%%%%%%%%%%%%%%%%

\section{Proof of Theorem \ref{th:first} (first moment)}
 Using the previous lemmas, we are now ready to prove Theorem~\ref{th:first}.
 To begin with, observe that, on the event~$D = k$, the total number of infected vertices can be written as
\begin{equation}
\label{eq:size}
  S = S (x_r) + \sum_{i = r - k}^{r - 1} S (x_i \setminus x_{i + 1}) = S (x_r) + \sum_{i = 1}^k \,S (x_{r - i} \setminus x_{r - i + 1}).
\end{equation}
 Then, conditioning on~$D$ and using Lemma~\ref{lem:minus}, we get
\begin{equation}
\label{eq:thm1a}
  \begin{array}{rcl} E_r (S) & \n = \n &
  \displaystyle \sum_{k = 0}^r \,E (S \,| \,D = k) \,P (D = k) \vspace*{4pt} \\ & \n = \n &
  \displaystyle \sum_{k = 0}^r \,\bigg(E (S (x_r)) + \sum_{i = 1}^k \,E (S (x_{r - i} \setminus x_{r - i + 1})) \bigg) P (D = k) \vspace*{4pt} \\ & \n = \n &
  \displaystyle \sum_{k = 0}^r \,\bigg(E (S (x_r)) + \sum_{i = 0}^{k - 1} \ (1 + \mu_- E (S (x_{r - i})) \bigg) P (D = k) \vspace*{4pt} \\ & \n = \n &
  \displaystyle E (S (x_r)) + E (D) + \mu_- \ \sum_{k = 1}^r \ \sum_{i = 0}^{k - 1} \,E (S (x_{r - i})) \,P (D = k). \end{array}
\end{equation}
 Exchanging the two sums, and using Lemma~\ref{lem:plus}, we obtain
\begin{equation}
\label{eq:thm1b}
  \begin{array}{l}
  \displaystyle \sum_{k = 1}^r \ \sum_{i = 0}^{k - 1} \,E (S (x_{r - i})) \,P (D = k) =
  \displaystyle \sum_{i = 0}^{r - 1} \ \sum_{k = i + 1}^r \,E (S (x_{r - i})) \,P (D = k) \vspace*{4pt} \\ \hspace*{40pt} =
  \displaystyle \sum_{i = 0}^{r - 1} \,E (S (x_{r - i})) \,P (D > i) =
  \displaystyle \sum_{i = 0}^{r - 1} \,q^{i + 1} \,E (S (x_{r - i})) =
  \displaystyle \sum_{i = 0}^{r - 1} \,q^{i + 1} \,\mu_{1, r - i} \vspace*{8pt} \\ \hspace*{40pt} =
  \displaystyle \sum_{i = 0}^{r - 1} \,q^{i + 1} \bigg(\frac{1 - \mu_+^{R - r + i + 1}}{1 - \mu_+} \bigg) =
  \displaystyle \frac{1}{1 - \mu_+} \bigg(q \ \sum_{i = 0}^{r - 1} \,q^i - q \,\mu_+^{R - r + 1} \ \sum_{i = 0}^{r - 1} \,(q \mu_+)^i \bigg) \vspace*{8pt} \\ \hspace*{40pt} =
  \displaystyle \frac{q}{1 - \mu_+} \bigg(\bigg(\frac{1 - q^r}{1 - q} \bigg) - \mu_+^{R - r + 1} \bigg(\frac{1 - (q \mu_+)^r}{1 - q \mu_+} \bigg) \bigg). \end{array}
\end{equation}
 Combining~\eqref{eq:thm1a} and~\eqref{eq:thm1b}, and using Lemmas~\ref{lem:D} and~\ref{lem:plus}, we deduce that
 $$ \begin{array}{rcl} E_r (S) & \n = \n &
    \displaystyle \frac{1 - \mu_+^{R - r + 1}}{1 - \mu_+} + q \bigg(\frac{1 - q^r}{1 - q} \bigg) + \frac{q \mu_-}{1 - \mu_+} \bigg(\bigg(\frac{1 - q^r}{1 - q} \bigg) - \mu_+^{R - r + 1} \bigg(\frac{1 - (q \mu_+)^r}{1 - q \mu_+} \bigg) \bigg) \vspace*{8pt} \\ & \n = \n &
    \displaystyle \frac{1 - \mu_+^{R - r + 1}}{1 - \mu_+} + q \bigg(\frac{1 - q^r}{1 - q} \bigg) \bigg(\frac{1 - \mu_+ + \mu_-}{1 - \mu_+} \bigg) + \frac{q \mu_-}{1 - \mu_+} \bigg(- \mu_+^{R - r + 1} \bigg(\frac{1 - (q \mu_+)^r}{1 - q \mu_+} \bigg) \bigg) \vspace*{8pt} \\ & \n = \n &
    \displaystyle \frac{1}{1 - \mu_+} \bigg(1 + q \bigg(\frac{1 - q^r}{1 - q} \bigg)(1 - \mu_+ + \mu_-) - \mu_+^{R - r + 1} \bigg(1 + q \mu_- \bigg(\frac{1 - (q \mu_+)^r}{1 - q \mu_+} \bigg) \bigg) \bigg). \end{array} $$
 Using Lemma~\ref{lem:mean} also gives~$1 - \mu_+ + \mu_- = 1 - p$ and
 $$ 1 + q \mu_- \bigg(\frac{1 - (q \mu_+)^r}{1 - q \mu_+} \bigg) = \frac{1 - pq (1 + (\mu - 1)(\mu pq)^r)}{1 - \mu pq}. $$
 In conclusion,
 $$ E_r (S) = \frac{1}{1 - \mu p} \bigg(1 + q \bigg(\frac{1 - q^r}{1 - q} \bigg) (1 - p) - (\mu p)^{R - r + 1} \bigg(\frac{1 - pq (1 + (\mu - 1)(\mu pq)^r)}{1 - \mu pq} \bigg) \bigg), $$
 which completes the proof of Theorem~\ref{th:first}.

%%%%%%%%%%%%%%%%%%%%%%%%%%%%%%%%%%%%%%%%%%%%%%%%%%%%%%%%%%%%%%%%%%%%%%%%%%%%%%%%%%%%%%%%%%%%%%%%%%%%%%%%%%%%%%%%%%%%%%%%%%%%%%%%%%%%%%%%%%%%%%%%%%%%%%%%%%%%%%%%%%%%%%%%%%%%

\section{Proof of Theorem \ref{th:second} and~\eqref{eq:moment-2} (second moment)}
\label{sec:moment-2}
 To prove Theorem~\ref{th:second}, we follow the same strategy as for Theorem~\ref{th:first} using also that the numbers of infected vertices in disjoint subtrees are independent.
 Taking the square in~\eqref{eq:size},
 $$ \begin{array}{rcl} S^2 & \n = \n &
    \displaystyle S (x_r)^2 + 2 S (x_r) \ \sum_{i = 1}^k \,S (x_{r - i} \setminus x_{r - i + 1}) + \bigg(\sum_{i = 1}^k \,S (x_{r - i} \setminus x_{r - i + 1}) \bigg)^2 \vspace*{4pt} \\ & \n = \n &
    \displaystyle S (x_r)^2 + 2 S (x_r) \ \sum_{i = 1}^k \,S (x_{r - i} \setminus x_{r - i + 1}) \vspace*{0pt} \\ && \hspace*{25pt} + \
    \displaystyle \sum_{i = 1}^k \,S (x_{r - i} \setminus x_{r - i + 1})^2 + \sum_{i \neq j} \,S (x_{r - i} \setminus x_{r - i + 1}) S (x_{r - j} \setminus x_{r - j + 1}), \end{array} $$
 then conditioning on the event~$D = k$ and using independence of the random variables in~\eqref{eq:subtrees}~(because they represent the number of infected vertices in disjoint subtrees), we obtain
\begin{equation}
\label{eq:second}
  \begin{array}{rcl}
  \displaystyle E_r (S^2) = \sum_{k = 0}^r \bigg(E (S (x_r)^2) & \n + \n &
  \displaystyle 2 E (S (x_r)) \ \sum_{i = 1}^k \,E (S (x_{r - i} \setminus x_{r - i + 1})) \vspace*{-4pt} \\ & \n + \n &
  \displaystyle \sum_{i = 1}^k \,E (S (x_{r - i} \setminus x_{r - i + 1})^2) \vspace*{0pt} \\ & \n + \n &
  \displaystyle \sum_{i \neq j} \,E (S (x_{r - i} \setminus x_{r - i + 1})) \,E (S (x_{r - j} \setminus x_{r - j + 1})) \bigg) P (D = k). \end{array}
\end{equation}
 Then, using Lemmas~\ref{lem:plus} and~\ref{lem:minus}, and recalling from~\eqref{eq:D} that
 $$ P (D = k) = q_k \quad \hbox{for all} \quad k = 0, 1, \ldots, r, $$
 the right-hand side of~\eqref{eq:second} becomes
 $$ \begin{array}{rcl}
    \displaystyle E_r (S^2) = \sum_{k = 0}^r \bigg(\mu_{2, r} & \n + \n &
    \displaystyle 2 \mu_{1, r} \ \sum_{i = 1}^k \ (1 + \mu_- \mu_{1, r - i + 1}) \vspace*{-4pt} \\ & \n + \n &
    \displaystyle \sum_{i = 1}^k \ (1 + 2 \mu_- \mu_{1, r - i + 1} + \mu_- \mu_{2, r - i + 1} + (\sigma_-^2 + \mu_-^2 - \mu_-)(\mu_{1, r - i + 1})^2) \vspace*{0pt} \\ & \n + \n &
    \displaystyle \sum_{i \neq j} \,(1 + \mu_- \mu_{1, r - i + 1})(1 + \mu_- \mu_{1, r - j + 1}) \bigg) \,q_k \end{array} $$
 This completes the proof of the theorem. \\
\indent To simplify the previous expression for the second moment when the percolation process is subcritical~$\mu_+ = \mu p < 1$ and the local area network is infinite, and prove~\eqref{eq:moment-2}, we first observe that, by the monotone convergence theorem, the second moment on the infinite tree is equal to the limit of the second moment on the finite tree as the radius~$R \to \infty$.
 The reason why the expression simplifies in the infinite tree limit is because the terms~$\mu_{1, j}$ and~$\mu_{2, j}$ no longer depend on the index~$j$, which is due to the fact that they now represent the first and second moments of the number of infected vertices on infinite subtrees that are identically distributed.
 More precisely, taking the limit as~$R \to \infty$ in~\eqref{eq:parameters-3} and using Lemma~\ref{lem:plus}, we get
\begin{equation}
\label{eq:infinite-1}
  E (S (x_r)) = \mu_{1, r} = \frac{1}{1 - \mu_+} \quad \hbox{and} \quad E (S (x_r)^2) = \mu_{2, r} = \frac{1}{(1 - \mu_+)^2} \ \bigg(1 + \frac{\sigma_+^2}{1 - \mu_+} \bigg).
\end{equation}
 This, together with Lemma~\ref{lem:minus}, implies that
\begin{equation}
\label{eq:infinite-2}
  \begin{array}{rcl}
    E (S (x_{r - i} \setminus x_{r - i + 1})) & \n = \n & 1 + \mu_- \mu_{1, r - i + 1} =
  \displaystyle 1 + \frac{\mu_-}{1 - \mu_+} \vspace*{4pt} \\
    E (S (x_{r - i} \setminus x_{r - i + 1})^2) & \n = \n & 1 + 2 \mu_- \mu_{1, r - i + 1} + \mu_- \mu_{2, r - i + 1} + (\sigma_-^2 + \mu_-^2 - \mu_-)(\mu_{1, r - i + 1})^2 \vspace*{8pt} \\ & \n = \n &
  \displaystyle 1 + \frac{2 \mu_-}{1 - \mu_+} + \frac{\mu_-}{(1 - \mu_+)^2} \ \bigg(1 + \frac{\sigma_+^2}{1 - \mu_+} \bigg) + \frac{\sigma_-^2 + \mu_-^2 - \mu_-}{(1 - \mu_+)^2} \vspace*{8pt} \\ & \n = \n &
  \displaystyle 1 + \frac{2 \mu_-}{1 - \mu_+} + \frac{\sigma_-^2 + \mu_-^2}{(1 - \mu_+)^2} + \frac{\sigma_+^2 \mu_-}{(1 - \mu_+)^3}. \end{array}
\end{equation}
 Using that, in the limit as~$R \to \infty$, the terms in the two sums over~$i$ and the terms in the sum over~$i \neq j$ in equation~\eqref{eq:second} are constant given by~\eqref{eq:infinite-1} and~\eqref{eq:infinite-2}, we obtain
 $$ \begin{array}{rcl}
    \displaystyle E_r (S^2) & \n = \n &
    \displaystyle \frac{1}{(1 - \mu_+)^2} \ \bigg(1 + \frac{\sigma_+^2}{1 - \mu_+} \bigg) + \frac{2}{1 - \mu_+} \bigg(1 + \frac{\mu_-}{1 - \mu_+} \bigg) E (D) \vspace*{8pt} \\ && \hspace*{5pt} + \
    \displaystyle \bigg(1 + \frac{2 \mu_-}{1 - \mu_+} + \frac{\sigma_-^2 + \mu_-^2}{(1 - \mu_+)^2} + \frac{\sigma_+^2 \mu_-}{(1 - \mu_+)^3} \bigg) E (D) + \bigg(1 + \frac{\mu_-}{1 - \mu_+} \bigg)^2 E (D (D - 1)) \vspace*{8pt} \\ & \n = \n &
    \displaystyle \frac{1}{(1 - \mu_+)^2} \ \bigg(1 + \frac{\sigma_+^2}{1 - \mu_+} \bigg) +
    \displaystyle \bigg(1 + \frac{2 (1 + \mu_-)}{1 - \mu_+} + \frac{2 \mu_- + \sigma_-^2 + \mu_-^2}{(1 - \mu_+)^2} + \frac{\sigma_+^2 \mu_-}{(1 - \mu_+)^3} \bigg) E (D) \vspace*{8pt} \\ && \hspace*{5pt} + \
    \displaystyle \bigg(1 + \frac{\mu_-}{1 - \mu_+} \bigg)^2 E (D (D - 1)). \end{array} $$
 Then, using Lemma~\ref{lem:D}, we get
 $$ \begin{array}{rcl}
    \displaystyle E_r (S^2) & \n = \n &
    \displaystyle \frac{1}{(1 - \mu_+)^2} \ \bigg(1 + \frac{\sigma_+^2}{1 - \mu_+} \bigg) \vspace*{8pt} \\ && \hspace*{5pt} + \
    \displaystyle \bigg(1 + \frac{2 (1 + \mu_-)}{1 - \mu_+} + \frac{2 \mu_- + \sigma_-^2 + \mu_-^2}{(1 - \mu_+)^2} + \frac{\sigma_+^2 \mu_-}{(1 - \mu_+)^3} \bigg) \ q \bigg(\frac{1 - q^r}{1 - q} \bigg) \vspace*{8pt} \\ && \hspace*{5pt} + \
    \displaystyle \bigg(1 + \frac{\mu_-}{1 - \mu_+} \bigg)^2 \ \frac{2q^2 (1 - rq^{r - 1} + (r - 1) q^r)}{(1 - q)^2}. \end{array} $$
 and finally Lemmas~\ref{lem:mean} and~\ref{lem:variance},
 $$ \begin{array}{rcl}
    \displaystyle E_r (S^2) & \n = \n &
    \displaystyle \frac{1}{(1 - \mu p)^2} \ \bigg(1 + \frac{p (1 - p) \mu + p^2 \sigma^2}{1 - \mu p} \bigg) \vspace*{8pt} \\ && \hspace*{5pt} + \
    \displaystyle \bigg(1 + \frac{2 (1 + (\mu - 1) p)}{1 - \mu p} + \frac{2 (\mu - 1) p + p (1 - p)(\mu - 1) + p^2 \sigma^2 + (\mu - 1)^2 p^2}{(1 - \mu p)^2} \vspace*{8pt} \\ && \hspace*{100pt} + \
    \displaystyle \frac{(p (1 - p) \mu + p^2 \sigma^2)(\mu - 1) p}{(1 - \mu p)^3} \bigg) \ q \bigg(\frac{1 - q^r}{1 - q} \bigg) \vspace*{8pt} \\ && \hspace*{5pt} + \
    \displaystyle \bigg(1 + \frac{(\mu - 1) p}{1 - \mu p} \bigg)^2 \ \frac{2q^2 (1 - rq^{r - 1} + (r - 1) q^r)}{(1 - q)^2}, \end{array} $$
 which completes the proof of~\eqref{eq:moment-2}.

%%%%%%%%%%%%%%%%%%%%%%%%%%%%%%%%%%%%%%%%%%%%%%%%%%%%%%%%%%%%%%%%%%%%%%%%%%%%%%%%%%%%%%%%%%%%%%%%%%%%%%%%%%%%%%%%%%%%%%%%%%%%%%%%%%%%%%%%%%%%%%%%%%%%%%%%%%%%%%%%%%%%%%%%%%%%

\section{Proof of Theorem \ref{th:decay} (exponential decay)}
 To prove exponential decay of the diameter of the cluster of infected vertices, the idea is to study the process that keeps track of the random number of infected vertices at distance~$n$ from the highest infected vertex in the tree.
 More precisely, on the event~$D = k$, we let
 $$ X_n = \card (\C_n) \quad \hbox{where} \quad \C_n = \{x \in \C : d (x, x_{r - k}) = n \}. $$
 Recall that~$x_{r - k}$ is the unique vertex along the path connecting the source of the infection and the root of the tree that is at distance~$k$ from the source of the infection.
 The next lemma shows that, in the subcritical phase~$\mu p < 1$, the expected value of the process decays exponentially.
\begin{lemma}-- \
\label{lem:decay}
 Given that the infection starts at distance~$r$ from the root,
 $$ E_r (X_{n + 1}) = \left\{\begin{array}{ccl}
                      \mu p \,E (X_n) + (1 - p) \,q^{n + 1} & \hbox{for} & n < r \vspace*{2pt} \\
                      \mu p \,E (X_n)                       & \hbox{for} & n \geq r. \end{array} \right. $$
\end{lemma}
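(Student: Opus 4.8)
The idea is to condition on the event $\{D = k\}$, on which the highest infected vertex is $x_{r-k}$, and then run a one-step recursion for the shell sizes. The first point is structural: because the network is a tree, on $\{D = k\}$ the whole cluster $\C$ is contained in the subtree $T(x_{r-k})$ rooted at $x_{r-k}$. Indeed, for $k < r$ the unique path from the source $x_r$ to any vertex outside $T(x_{r-k})$ must traverse the arrow from $x_{r-k}$ to its parent $x_{r-k-1}$ in the offspring $\to$ parent direction, and this arrow is closed on $\{D = k\}$; for $k = r$ the subtree is already the whole tree. Consequently, inside $T(x_{r-k})$ the graph distance to $x_{r-k}$ is the depth, so $\C_n$ is exactly the set of infected vertices at depth $n$. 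By the same unique-path argument, whenever $v \in \C_{n+1}$ its parent (at depth $n$) is infected and hence lies in $\C_n$; conversely, a vertex $v$ at depth $n+1$ is infected if and only if its parent lies in $\C_n$ and the arrow parent $\to v$ is open, with the sole exception of the ``backbone'' vertices $x_{r-k}, x_{r-k+1}, \ldots, x_r$, which are infected with certainty regardless of the state of that arrow (having received the infection from below).

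Next I would condition on $\C_n$ and on $\{D = k\}$ and count, for each $u \in \C_n$, its expected number of infected children. A vertex $u \in \C_n$ that is not on the backbone sits above an as-yet-unexplored Galton-Watson subtree, so its offspring number has the offspring law and each child is open with probability $p$ independently, contributing $\mu p = \mu_+$ in expectation. The backbone vertex contained in $\C_n$ is $x_{r-k+n}$, present precisely when $n \le k$; if $n < k$ one of its children is the next backbone vertex $x_{r-k+n+1}$ (infected for sure) while the remaining children number $Y - 1$ in distribution with each open with probability $p$, contributing $1 + (\mu - 1) p = 1 + \mu_-$; if $n = k$ the backbone vertex is the source $x_r$, all of whose children are simply open with probability $p$ each, contributing $\mu_+$. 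Since $\mu_- - \mu_+ = -p$, summing over $u \in \C_n$ gives
$$ E (X_{n+1} \mid \C_n, D = k) = \mu_+ \,X_n + (1 - p)\,\mathbf{1}\{n < k\}, $$
and taking expectations over $\C_n$ yields $E (X_{n+1} \mid D = k) = \mu_+ \,E (X_n \mid D = k) + (1 - p)\,\mathbf{1}\{n < k\}$.

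Finally I would average over $D$ using~\eqref{eq:D}: multiplying the last identity by $P(D = k)$ and summing over $k = 0, 1, \ldots, r$, the indicator term contributes $(1 - p)\,P(D > n)$, so that
$$ E_r (X_{n+1}) = \mu_+ \,E_r (X_n) + (1 - p)\,P(D > n). $$
Since $D \ge m$ exactly when the first $m$ arrows along the backbone (in the offspring $\to$ parent direction) are open, $P(D > n) = P(D \ge n+1) = q^{n+1}$ when $n < r$ and $P(D > n) = 0$ when $n \ge r$; recalling $\mu_+ = \mu p$, this is the claimed recursion.

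The steps that require care, and that I expect to be the main obstacle, are the two unique-path arguments (that $\C$ stays inside $T(x_{r-k})$ and that the parent of an infected vertex is infected) together with the verification that, conditionally on $\C_n$ and $\{D = k\}$, the tree and the percolation strictly below depth $n$ are genuinely fresh, so that offspring counts and the arrows into depth $n+1$ are independent of $\C_n$; the fact that the source $x_r$, although a backbone vertex, behaves like a generic vertex when $n = k$ (no guaranteed extra child) also has to be tracked correctly. Once these points are settled, the remaining computation is elementary.
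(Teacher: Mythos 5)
Your proposal is correct and follows essentially the same route as the paper: condition on $\{D = k\}$, note that each infected vertex at distance $n$ from the highest infected vertex has $\mu p$ infected offspring in expectation except the backbone vertex, which contributes $1 + (\mu - 1)p$ when $n < k$, so that $E (X_{n + 1} \,|\, \C_n, D = k) = \mu p \, X_n + (1 - p) \mathbf{1} \{n < k\}$, and then average over $D$ using $P (D > n) = q^{n + 1}$ for $n < r$ and $0$ otherwise. Your write-up is in fact a touch more careful than the paper's (you average over $D$ at the level of $E (X_{n + 1} \,|\, D = k)$ rather than inside the conditional expectation given $X_n$, and you spell out the unique-path and containment facts the paper leaves implicit), but the argument is the same.
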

\begin{proof}
 Recall that each vertex produces~$\mu$ offspring in average and that each of the offspring of an infected vertex is infected with probability~$p$, from which it follows that each infected vertex has~$\mu p$ infected offspring in average.
 Now, given~$D = k$, the process is conditioned so that
 $$ x_{r - k} \in \C_0, \quad x_{r - k + 1} \in \C_1, \quad \ldots \quad x_{r - 1} \in \C_{k - 1} \quad \hbox{and} \quad  x_r \in \C_k $$
 are infected so, until generation~$k - 1$, the vertices in~$\C_n$ have~$\mu p$ infected offspring in average except for vertex~$x_{r - k + n}$ that has~$(\mu - 1) p + 1$ infected offspring in average.
 This implies that
 $$ E_r (X_{n + 1} \,| \,X_n, D = k) = \left\{\begin{array}{ccl}
                                       \mu p (X_n - 1) + (\mu - 1) p + 1 & \hbox{for} & n < k \vspace*{2pt} \\
                                       \mu p X_n     & \hbox{for} & n \geq k. \end{array} \right. $$
 Conditioning on the random variable~$D$, we deduce that
 $$ \begin{array}{rcl}
     E_r (X_{n + 1} \,| \,X_n) & \n = \n &
    \displaystyle \sum_{k = 0}^{\infty} \,E_r (X_{n + 1} \,| \,X_n, D = k) P_r (D = k) \vspace*{4pt} \\ & \n = \n &
    \displaystyle \sum_{k = 0}^n \ (\mu p X_n) \,P_r (D = k) + \sum_{k = n + 1}^{\infty} (\mu p (X_n - 1) + (\mu - 1) p + 1) \,P_r (D = k) \vspace*{4pt} \\ & \n = \n &
    \displaystyle \sum_{k = 0}^n \ (\mu p X_n) \,P_r (D = k) + \sum_{k = n + 1}^{\infty} (\mu p X_n + 1 - p) \,P_r (D = k) \vspace*{4pt} \\ & \n = \n &
    \displaystyle \sum_{k = 0}^{\infty} \ (\mu p X_n) \,P_r (D = k) + (1 - p) \sum_{k = n + 1}^{\infty} P_r (D = k) =
    \mu p X_n + (1 - p) P_r (D > n). \end{array} $$
 Recalling the probability mass function of~$D$, we deduce that
 $$ E_r (X_{n + 1}) = E (E (X_{n + 1} \,| \,X_n)) = \left\{\begin{array}{ccl}
                                                    \mu p \,E (X_n) + (1 - p) \,q^{n + 1} & \hbox{for} & n < r \vspace*{2pt} \\
                                                    \mu p \,E (X_n)                       & \hbox{for} & n \geq r. \end{array} \right. $$
 This completes the proof.
\end{proof} \\ \\
 It follows from the lemma that, for all~$n \leq r$,
 $$ \begin{array}{rcl}
     E_r (X_n) & \n = \n & (\mu p) \,E_r (X_{n - 1}) + (1 - p) \,q^n \vspace*{4pt} \\
               & \n = \n & (\mu p)^2 E_r (X_{n - 2}) + (\mu p) (1 - p) \,q^{n - 1} + (1 - p) \,q^n \vspace*{4pt} \\
               & \n = \n & (\mu p)^3 E_r (X_{n - 3}) + (\mu p)^2 (1 - p) \,q^{n - 2} + (\mu p)(1 - p) \,q^{n - 1} + (1 - p) \,q^n \vspace*{4pt} \\
               & \n = \n & (\mu p)^n E_r (X_0) + (1 - p)((\mu p)^{n - 1} \,q + (\mu p)^{n - 2} \,q^2 + \cdots + (\mu p) \,q^{n - 1} + q^n) \vspace*{4pt} \\
               & \n \leq \n & (\mu p)^n E_r (X_0) + (\mu p)^{n - 1} \,q + (\mu p)^{n - 2} \,q^2 + \cdots + (\mu p) \,q^{n - 1} + q^n. \end{array} $$
 Then, using that~$E_r (X_0) = 1$, we get
 $$ E_r (X_n) \leq \sum_{k = 0}^n \,(\mu p)^{n - k} q^k = (\mu p)^n \,\sum_{k = 0}^n \bigg(\frac{q}{\mu p} \bigg)^k = \frac{1 - (q / \mu p)^{n + 1}}{1 - (q / \mu p)} \ (\mu p)^n \quad \hbox{for all} \quad n \leq r. $$
 Observing also that, for all~$n > r$,
 $$ E_r (X_n) \leq (\mu p) \,E_r (X_{n - 1}) \leq (\mu p)^2 E_r (X_{n - 2}) \leq \cdots \leq (\mu p)^{n - r} E_r (X_r) $$
 we conclude that, for all~$n > r$, the diameter exceeds~$2n$ with probability
 $$ \begin{array}{rcl} P_r (\diam (\C) \geq 2n) & \n \leq \n &
    \displaystyle P_r (X_n > 0) = \sum_{k = 1}^{\infty} \,P_r (X_n = k) \leq \sum_{k = 1}^{\infty} \,k \,P_r (X_n = k) = E_r (X_n) \vspace*{8pt} \\ & \n \leq \n &
    \displaystyle (\mu p)^{n - r} E_r (X_r) \leq (\mu p)^{n - r} \ \frac{1 - (q / \mu p)^{r + 1}}{1 - (q / \mu p)} \ (\mu p)^r =
    \displaystyle \frac{1 - (q / \mu p)^{r + 1}}{1 - (q / \mu p)} \ (\mu p)^n. \end{array} $$
 This completes the proof of Theorem~\ref{th:decay}.

%%%%%%%%%%%%%%%%%%%%%%%%%%%%%%%%%%%%%%%%%%%%%%%%%%%%%%%%%%%%%%%%%%%%%%%%%%%%%%%%%%%%%%%%%%%%%%%%%%%%%%%%%%%%%%%%%%%%%%%%%%%%%%%%%%%%%%%%%%%%%%%%%%%%%%%%%%%%%%%%%%%%%%%%%%%%

%\section{Conclusion}
%\hspace{\parindent} In this paper, as a main theoretical contribution to the existing financial market modeling literature, we develop a structural percolation model for the spread of infection within a financial market that is modeled as a tree structure. From our results, we found the exact first moment of the cluster size on a finite tree and the second moment on the infinite tree. Therefore, these moments can be used to find the mean and variance of number of companies that would possess the new infection in order to assess the price of the shares or any other financial securities. In addition, we found that the tail distribution of the radius of the infection decays exponentially.

%There are many opportunities for further research following this work. One is to include loops in the tree, so that traders can buy from multiple sources. Another is to consider another type of structure instead of a tree e.g. a star with bidirectional probabilities.

%%%%%%%%%%%%%%%%%%%%%%%%%%%%%%%%%%%%%%%%%%%%%%%%%%%%%%%%%%%%%%%%%%%%%%%%%%%%%%%%%%%%%%%%%%%%%%%%%%%%%%%%%%%%%%%%%%%%%%%%%%%%%%%%%%%%%%%%%%%%%%%%%%%%%%%%%%%%%%%%%%%%%%%%%%%%

\bibliographystyle{plain}
\bibliography{biblio.bib}

\end{document}